\newcommand{\R}{\mathbb{R}}
\newcommand{\C}{\mathbb{C}}
\newcommand{\N}{\mathbb{N}}
\newtheorem{remark}{Remark}
\newtheorem{proposition}{Proposition}
\newtheorem{definition}{Definition}
\newenvironment{proof}{\paragraph{Proof:}}{$\Box$}
\newcommand{\e}{\ensuremath{\mathrm{e}}}
\newcommand{\psis}{{\mathcal S}}
\begin{document}
\title{Compositions of pseudo-symmetric integrators with complex coefficients for the numerical integration of differential equations}

\author{ Fernando Casas\footnote{Universitat Jaume I, IMAC \& Departament de Matem\`atiques, 12071~Castell\'on, Spain. Email: fernando.casas@uji.es}, \and 
Philippe Chartier\footnote{Univ Rennes, INRIA-MINGuS, CNRS, IRMAR, F-35000 Rennes, France. Email:Philippe.Chartier@inria.fr}, \and
Alejandro Escorihuela-Tom\`as\footnote{Universitat Jaume I, Departament de Matem\`atiques, 12071~Castell\'on, Spain. Email: alescori@uji.es}, \and
Yong Zhang\footnote{Center for Applied Mathematics, Tianjin University, China,  Email: 
sunny5zhang@163.com} 
}


%
\maketitle


\begin{abstract}
In this paper, we are concerned with the construction and analysis of a new class of methods obtained as double jump compositions with complex coefficients and projection on the real axis. It is shown in particular that the new integrators are symmetric and symplectic up to high orders if one uses a symmetric and symplectic basic method. In terms of efficiency, the aforementioned technique requires fewer stages than standard compositions of the same orders and is thus expected to lead to faster methods. \\ \\
{\bf Keywords:} Composition methods, projection on the real-axis, pseudo-symmetry, pseudo-symplecticity. 
\end{abstract}\bigskip



\section{Introduction}

Given a differential equation
\begin{equation}  \label{de.1}
   \dot{x} \equiv \frac{d x}{dt} = f(x),  \qquad x(0) = x_0,
\end{equation}   
composition methods constitute a powerful technique to raise the order of a given integrator $\psi_{\tau}$ applied to (\ref{de.1}) with time-step $\tau$, as high as might be required, by considering expressions of the form 
\begin{equation}  \label{co.1}
\phi_{\tau} = \psi_{\gamma_1 \tau} \circ \psi_{\gamma_2 \tau} \circ \cdots \circ \psi_{\gamma_s \tau},
\end{equation}
where the coefficients $\gamma_1, \gamma_2, \ldots, \gamma_s$ are appropriately chosen so as to satisfy some {\em universal algebraic conditions} \cite{hairer06gni,murua99ocf,chartier09aat}. 
It is known in particular that if $\psi_{\tau}$ is of order $k$, i.e. satisfies 
$$
\varphi_{\tau}(x_0) - \psi_{\tau}(x_0) = \mathcal{O}(\tau^{k+1}),
$$
where $\varphi_{\tau}$ denotes the
exact flow of (\ref{de.1}),  then  
$\phi_{\tau}$ will be at least of order $k+1$ (i.e., local error $k+2$) if the following two conditions are satisfied
\begin{equation}  \label{cond.1}
(i) \, \sum_{i=1}^s \gamma_i = 1 \qquad \mbox{ and } \qquad (ii)  \sum_{i=1}^s \gamma_i^{k+1} = 0.
\end{equation}
Given that these two equations have no real solution for odd $k$ and arbitrary $s$, a series of authors (e.g. \cite{suzuki90fdo,yoshida90coh}) suggested to start from a second-order method and to consider {\em symmetric compositions} only, i.e., schemes
 with coefficients satisfying the additional condition 
$$
\gamma_{s+1-i}=\gamma_i, \qquad i=1,\ldots,s.
$$
This has led to so-called triple-jump compositions ($s=3$, $\gamma_3=\gamma_1$) obtained by iterating the process described above to construct a sequence of symmetric methods with even orders (see, e.g.,  \cite{hairer06gni} pp. 44).  

In spite of its simplicity, the triple-jump rationale leads to inefficiencies for high orders as compared to methods obtained by solving directly the order conditions \cite{hairer06gni}. 
On top of this, it also suffers from the occurrence of negative time-steps, although this fact is not specific to triple-jump methods and concerns all composition or splitting methods of orders higher than two. This, of course, is a severe limiting factor for equations where the vector field (usually an operator) is not reversible, the prototypical example of which being the heat equation. To circumvent this difficulty, several authors have suggested to use complex time-steps (or complex coefficients) in the context of parabolic equations \cite{castella09smw, hansen09hos}. One indeed easily sees that, already for $s=2$, solutions of equations $(i)-(ii)$ exist  in $\mathbb{C}$. 

Generally speaking, suppose that $\psi_{\tau}$ is an integrator of order $k$, denoted $\psis_{\tau}^{[k]}$ in the sequel for clarity, and consider the
composition (\ref{co.1}) with $s=2$, 
\begin{equation}  \label{eq:S_h-C3n}
   \psis_{\tau}^{[k+1]} =
   \psis_{\gamma_1 \tau}^{[k]} \circ \psis_{\gamma_2 \tau}^{[k]}.
\end{equation}
Then, if the coefficients verify conditions $(i)-(ii)$, that is to say if 
\begin{equation}   \label{coefi.com}
\begin{array}{l}
  \gamma_1 =  \bar{\gamma}_2 \equiv \gamma= \displaystyle \frac{1}{2} + \frac{i}{2}
  \, \frac{\sin (\frac{2\ell+1}{k+1} \pi)}{1+\cos (\frac{2\ell+1}{k+1} \pi )} \quad
   \mbox{for} \quad
   \left\{ \begin{array} {ll}
   -\frac{k}{2} \leq \ell \leq \frac{k}{2}-1 & \mbox{if} \ k \ \mbox{is even}\\
   -\frac{k+1}{2} \leq \ell \leq \frac{k-1}{2} & \mbox{if} \ k \ \mbox{is odd}
    \end{array} \right.
\end{array},
\end{equation}
then (\ref{eq:S_h-C3n}) results in a method of order $k+1$, which can subsequently be used
to generate recursively higher order composition schemes by applying the same procedure.  The choice $\ell=0$, 
\begin{align} \label{eq:gam}
  \gamma = \gamma^{[k]}:= \frac{1}{2} + \frac{i}{2} \frac{\sin \left(\frac{\pi}{k+1}\right)}{1+ \cos\left(\frac{\pi}{k+1}\right)} = 
  \frac{1}{2} + \frac{i}{2} \tan \left( \frac{\pi}{2(k+1)} \right) = \frac{1}{2 \cos\left(\frac{\pi}{2(k+1)}\right)} e^{\frac{\pi}{2(k+1)}}
\end{align}
gives the solutions with
the smallest phase. If we start with a symmetric method of order 2, $\psis_{\tau}^{[2]}$, and apply composition (\ref{eq:S_h-C3n}) with
corresponding coefficients (\ref{eq:gam}), 
we can construct the following sequence of methods:
\[
   \psis_{\tau}^{[2]} \longrightarrow \psis_{\tau}^{[3]} \longrightarrow \psis_{\tau}^{[4]} \longrightarrow \psis_{\tau}^{[5]} \longrightarrow \psis_{\tau}^{[6]}, 
\]
all of which have coefficients with positive real part  \cite{hansen09hos}. 
The final method of order $6$ involves $16$ evaluations of the basic scheme $\psis_{\tau}^{[2]}$. By contrast, there are
composition methods of order $6$ (both with real and complex coefficients) involving just $7$ evaluations of $\psis_{\tau}^{[2]}$ \cite{blanes13oho,yoshida90coh}. It is thus apparent that
this direct approach does not lead to cost-efficient high-order schemes.

One should remark that the composition (\ref{eq:S_h-C3n}) does not provide a time-symmetric method, i.e., 
$\psis_{-\tau}^{[k+1]} \circ \psis_{\tau}^{[k+1]}$ is \emph{not} the identity map, even if  $\psis_{\tau}^{[k]}$ happens to be symmetric. As 
we have mentioned before,
symmetry allows to raise the order by two at each iteration by considering the triple-jump composition
\begin{equation}\label{suzu_n}
   \psis_{\tau}^{[2k+2]} =
   \psis_{\gamma_1 \tau}^{[2k]} \circ \psis_{\gamma_2 \tau}^{[2k]} \circ  \psis_{\gamma_1
   \tau}^{[2k]}
\end{equation}
starting from a symmetric method. 
Apart from the real solution, the complex one with the smallest
phase is
\begin{equation}   \label{tj2p}
 \gamma_1 =\frac{\e^{i\pi/(k+1)}}{2^{1/(k+1)}-2 \, \e^{i\pi/(k+1)}}, \qquad
 \gamma_2 = 1-2 \gamma_1,
\end{equation}
and symmetric methods up to order $8$ with coefficients having positive
real part are possible if one starts with a symmetric second-order scheme\footnote{It is actually possible to reach order 14 if, in the construction, one uses formula \eqref{suzu_n} alternatively with coefficients $\gamma_1, \gamma_2$ and coefficients $\bar \gamma_1, \bar \gamma_2$ \cite{castella09smw}.}. These order barriers has been rigorously proved in \cite{blanes13oho}.

The simple third-order scheme (\ref{eq:S_h-C3n}) corresponding to $k=2$ has been in fact rediscovered several times in the literature
\cite{bandrauk91ies,castella09smw,chambers03siw,hansen09hos,suzuki92gto}. 
In particular, it was shown in \cite{chambers03siw} that the method, when applied to the two-body Kepler problem, behaves indeed as a fourth-order integrator, the reason being 
attributed to the fact that the main error term in the asymptotic expansion is purely imaginary. In this note we elaborate further the analysis and
provide a comprehensive study of the general composition (\ref{eq:S_h-C3n}), paying special attention to the qualitative properties the
method shares with the continuous system (\ref{de.1}).
In addition, we show how it is possible
to combine compositions and a trivial linear combination to raise the order, while still preserving the qualitative properties of the basic integrator up to an order higher
than of the method itself.

\section{Composition and pseudo-symmetry or pseudo-symplecticity}

In what follows, we will assume for convenience that all values of $x$ in (\ref{de.1}) lie in a compact set $K$ where the function $f$ is {\em smooth}. 
Before starting the analysis, it is worth recalling the notions of adjoint method and symplectic flow.

The \emph{adjoint method} $\psi_{\tau}^*$ of a given method is the inverse map of the original integrator with reversed
time step $-\tau$, i.e., $\psi_{\tau}^* := \psi_{-\tau}^{-1}$. A symmetric method satisfies $\psi_{\tau}^* = \psi_{\tau}$ \cite{chartier15sm,hairer06gni}. 

The vector field $f$ in (\ref{de.1}) is Hamiltonian if there exists a function $H(x)$ such that $f = J \nabla_x H(x)$, where $x = (q,p)^T$ and
$J$ is the basic canonical matrix. Then, the exact flow of (\ref{de.1}) is a symplectic transformation, $\varphi_t'(x)^T J \varphi_t'(x) = J$ for
$t \ge 0$ \cite{blanes16aci,sanz-serna94nhp}.

It then makes sense introducing the following definitions, taken from \cite{chartier98rbs} and \cite{aubry98psr}:
\begin{definition}  \label{def1}
Let $\psi_\tau$ be a smooth and consistent integrator:
\begin{enumerate}
\item it is  {\em pseudo-symmetric} of {\em pseudo-symmetry order} $q$ if for all sufficiently small $\tau$, the following relation holds true:
\begin{align}
\label{eq:pseudosym0}
\psi_\tau^* = \psi_{\tau} + {\cal O}(\tau^{q+1}),
\end{align}
where the constant in the ${\cal O}$-term depends on bounds of derivatives of $\psi$ on $K$. 
\item it is  {\em pseudo-symplectic} of {\em pseudo-symplecticity order} $r$ if for all sufficiently small $\tau$, the following relation holds true
when $\psi_{\tau}$ is applied to a Hamiltonian system:
\begin{align}
\label{eq:pseudosymplec}
(\psi_\tau')^T \, J \, \psi_{\tau}' = J + {\cal O}(\tau^{r+1}),
\end{align}
where the constant in the ${\cal O}$-term depends on bounds of derivatives of $\psi$ on $K$. 
\end{enumerate}
\end{definition}
\begin{remark}
A symmetric method is pseudo-symmetric of any order $q \in \N$, whereas a method of order $k$ is pseudo-symmetric of order $q \geq k$. A similar statement holds for symplectic methods. 
\end{remark}

As a first illustration of Definition \ref{def1}, let us consider again a symmetric 2nd-order method $\psis_{\tau}^{[2]}$ and form the composition
\[
\psi_{\tau}^{[3]} = \psis_{\gamma \tau}^{[2]} \circ \psis_{\bar{\gamma} \tau}^{[2]}
\]  
with $\gamma = \frac12 + i \frac{\sqrt{3}}{6}$. Then, if the vector field $f$ under consideration is real-valued, its real part
\[
 \Re(\psi_{\tau}^{[3]})  = \frac12 \left( \psi_{\tau}^{[3]} + \overline{\psi}_{\tau}^{[3]} \right) = \frac{1}{2}
 \left( \psis_{\gamma \tau}^{[2]} \circ \psis_{\bar \gamma \tau}^{[2]} + \psis_{\bar \gamma \tau}^{[2]} \circ \psis_{ \gamma \tau}^{[2]} \right).
\]
is a method of order 4 and pseudo-symmetric of pseudo-symmetry order 7. This result is a consequence of the fact that
\[
     (\psi_{\tau}^{[3]})^*  = \overline{\psi}_{\tau}^{[3]}
\]     
and the following general
statement, which lies at the core of the construction procedure described in this paper.

\begin{proposition} \label{prop:main}
Let $\psi_{\tau}$ be any consistent smooth method for equation (\ref{de.1}) and consider the new method
$$
R_{\tau} = \frac12 \left( \psi_{\tau} + \psi_{\tau}^*\right).
$$
Assume also that $\psi_{\tau}$ is pseudo-symmetric of order $q$. Then $R_{\tau}$ is  of pseudo-symmetry order $2q+1$. 
If $\psi_\tau$ is furthermore of pseudo-symplecticity order $r$, then $R_\tau$ is 
of pseudo-symplecticity order $\min(2 q+1,r)$.
\end{proposition}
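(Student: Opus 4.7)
The plan is to prove both claims by direct Taylor expansion, relying on two elementary identities, $\psi_{-\tau}(\psi_\tau^*(x))=x$ and $\psi_\tau^{-1}(\psi_\tau(x))=x$, together with the pseudo-symmetry hypothesis $\psi_\tau^*=\psi_\tau+\mathcal{O}(\tau^{q+1})$. Throughout, the smoothness of $f$ on the compact set $K$ will be used to transfer $\mathcal{O}$-bounds to spatial derivatives, so that $(\psi_\tau^*)'-\psi_\tau'=\mathcal{O}(\tau^{q+1})$ and similar estimates hold for higher derivatives without further comment.

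For the pseudo-symmetry of $R_\tau$, rather than computing $R_\tau^*$ directly I would first establish that $R_{-\tau}\circ R_\tau=\mathrm{id}+\mathcal{O}(\tau^{2q+2})$ and then deduce $R_\tau-R_\tau^*=\mathcal{O}(\tau^{2q+2})$ by composing on the left with $R_\tau^*=R_{-\tau}^{-1}$. Setting $a=\psi_\tau(x)$, $b=\psi_\tau^*(x)$ and $y=R_\tau(x)=\tfrac12(a+b)$, one has $a-b=\mathcal{O}(\tau^{q+1})$ and $y-a=-(y-b)=\mathcal{O}(\tau^{q+1})$. Since $R_{-\tau}=\tfrac12(\psi_{-\tau}+\psi_\tau^{-1})$, I would Taylor-expand $\psi_{-\tau}$ around $b$ (where $\psi_{-\tau}(b)=x$) and $\psi_\tau^{-1}$ around $a$ (where $\psi_\tau^{-1}(a)=x$). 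The quadratic remainders are each $\mathcal{O}(\tau^{2q+2})$, and the linear contributions combine into $\tfrac14\bigl[\psi_{-\tau}'(b)-(\psi_\tau^{-1})'(a)\bigr](a-b)$. The bracketed difference equals $((\psi_\tau^*)'(x))^{-1}-(\psi_\tau'(x))^{-1}=\mathcal{O}(\tau^{q+1})$, since both matrices are close to the identity; its product with $a-b$ is then of the desired order.

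For pseudo-symplecticity, I would first observe that pseudo-symplecticity of order $r$ is preserved under both time reversal and inversion (by direct manipulation of $M^TJM=J+\mathcal{O}(\tau^{r+1})$ with $M=\psi_{-\tau}'$), so $\psi_\tau^*$ is pseudo-symplectic of order $r$ as well. Writing $A=\psi_\tau'(x)$, $B=(\psi_\tau^*)'(x)$ and $D=B-A=\mathcal{O}(\tau^{q+1})$, the elementary expansion of $(A+D)^TJ(A+D)$ yields the algebraic identity
\[
A^TJB+B^TJA \;=\; A^TJA+B^TJB-D^TJD,
\]
and hence
\[
4\,(R_\tau')^TJR_\tau'=(A+B)^TJ(A+B)=2A^TJA+2B^TJB-D^TJD=4J+\mathcal{O}(\tau^{r+1})+\mathcal{O}(\tau^{2q+2}).
\]
Dividing by $4$ delivers $(R_\tau')^TJR_\tau'=J+\mathcal{O}(\tau^{\min(r+1,\,2q+2)})$, which is exactly pseudo-symplecticity of order $\min(r,2q+1)$. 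The only real subtlety of the whole proof lies in this algebraic re-grouping: a naive bound on the cross term $A^TJD+D^TJA$ using only $\|D\|=\mathcal{O}(\tau^{q+1})$ would merely yield order $\min(r,q)$, and it is the exact cancellation forced by applying the pseudo-symplecticity estimate simultaneously to $A$ and $B$ that upgrades the cross term to $\mathcal{O}(\tau^{\min(r+1,2q+2)})$.
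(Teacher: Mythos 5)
Your proof is correct, and its overall strategy matches the paper's: both arguments establish that the composition of $R_{\tau}$ and $R_{-\tau}$ equals $\mathrm{id}+\mathcal{O}(\tau^{2q+2})$ by direct expansion, and both treat symplecticity by applying the quadratic relation simultaneously to $\psi_\tau'$ and $(\psi_\tau^*)'$ so that the cross term is controlled by their difference. The execution of the symmetry part is, however, genuinely different. The paper introduces the defect $\delta_\tau$ via $\psi_\tau^*=\psi_\tau+\tau^{q+1}\delta_\tau$ and, from the two ways of writing $\psi_\tau\circ\psi_{-\tau}$ as a perturbation of the identity, derives a relation between $\delta_\tau$ and $\delta_{-\tau}$ that forces the two $\mathcal{O}(\tau^{q+1})$ corrections in $R_\tau\circ R_{-\tau}$ to cancel. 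You avoid comparing the defects at $\pm\tau$ altogether: by expanding the two constituents of $R_{-\tau}$ around their exact preimages $a=\psi_\tau(x)$ and $b=\psi_\tau^*(x)$, the residual becomes the single product $\frac14\bigl[\psi_{-\tau}'(b)-(\psi_\tau^{-1})'(a)\bigr](a-b)$ of two $\mathcal{O}(\tau^{q+1})$ factors, which is arguably cleaner; you also make explicit the final passage from $R_{-\tau}\circ R_\tau=\mathrm{id}+\mathcal{O}(\tau^{2q+2})$ to $R_\tau^*=R_\tau+\mathcal{O}(\tau^{2q+2})$, a step the paper leaves implicit. Your polarization identity $A^TJB+B^TJA=A^TJA+B^TJB-D^TJD$ is exactly the paper's cross-term estimate in closed form. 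One caveat, shared with the paper: the implication from $\psi_\tau^*-\psi_\tau=\mathcal{O}(\tau^{q+1})$ to $(\psi_\tau^*)'-\psi_\tau'=\mathcal{O}(\tau^{q+1})$ does not follow from smoothness of $f$ alone but from the assumption (made explicit at the start of the paper's proof, and implicit in Definition \ref{def1}) that the defect $\tau^{-(q+1)}(\psi_\tau^*-\psi_\tau)$ is a smooth function of $(\tau,x)$ with derivatives bounded on $K$; you should invoke that hypothesis rather than attribute the estimate to smoothness of $f$.
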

\begin{proof}
By assumption, there exists a smooth function $(\tau,x)  \mapsto \delta_\tau(x)$, defined for all $x$ in a compact set $K$ and for all sufficiently small real $\tau$, such that 
\begin{align}
\label{eq:pseudosym}
\psi_\tau^* = \psi_{\tau} + \tau^{q+1} \delta_\tau \quad \mbox{ or } \quad \psi_{-\tau}^{-1} = \psi_{\tau} + \tau^{q+1} \delta_{\tau}    \quad \mbox{ or } \quad \psi_\tau^{-1} = \psi_{-\tau} + (-\tau)^{q+1} \delta_{-\tau}, 
\end{align}
so that 
$$
R_\tau = \psi_\tau + \frac12 \tau^{q+1} \delta_\tau.
$$
Composing the third  relation of \eqref{eq:pseudosym} from the left by $\psi_{\tau}$, we obtain 
\begin{align} 
{\rm id} = \psi_{\tau} \circ \psi_{-\tau} + (-\tau)^{q+1} \psi_{\tau}' \circ \psi_{-\tau} \cdot \delta_{-\tau} +  {\cal O}(\tau^{2(q+1)}),
\end{align}
where the ${\cal O}$-term depends on bounds of the derivatives of $\psi_\tau$ and $\delta_\tau$  on $K$. Similarly, composing the second  relation of \eqref{eq:pseudosym} from the right by $\psi_{-\tau}$, we get 
\begin{align} 
{\rm id} = \psi_{\tau} \circ \psi_{-\tau} + \tau^{q+1}   \delta_{\tau} \circ \psi_{-\tau}.
\end{align}
As a consequence, we have 
$$
 \tau^{q+1}   \delta_{\tau} \circ \psi_{-\tau}  = (-\tau)^{q+1} \psi_{\tau}' \circ \psi_{-\tau} \cdot \delta_{-\tau} +  {\cal O}(\tau^{2(q+1)}). 
$$
We are then in position to write 
\begin{eqnarray*}
R_{\tau} \circ \,R_{-\tau} &=& \left( \psi_{\tau} +  \frac 12 \tau^{q+1} \delta_{\tau} \right) \circ   \left( \psi_{-\tau} +  \frac 12 (-\tau)^{q+1} \delta_{-\tau} \right) \\
&=& \psi_{\tau} \circ \, \psi_{-\tau} + \frac12 (-\tau)^{q+1} \psi'_{\tau} \circ \, \psi_{-\tau} \cdot  \delta_{-\tau}  
+ \frac 12 \tau^{q+1} \delta_{\tau} \circ \, \psi_{-\tau}  + {\cal O}(\tau^{2(q+1)})  \\
&=& {\rm id} +  {\cal O}(\tau^{2(q+1)}),
\end{eqnarray*}
which proves the first statement. Now, if $\psi_\tau$ is in addition of pseudo-symplecticity order $r$, then its adjoint $\psi^*_\tau$ is also of pseudo-symplecticity order $r$, so that relation (\ref{eq:pseudosym}) leads to 
\begin{align*}
J + {\cal O}(\tau^{r+1})&= (\partial_x \psi^*_\tau)^T J \partial_x \psi^*_\tau = (\psi_\tau' + \tau^{q+1} \delta'_\tau)^T J (\psi_\tau' + \tau^{q+1} \delta'_\tau) \\
&=J + {\cal O}(\tau^{r+1}) +\tau^{q+1} \left(  (\delta'_\tau )^T \, J \, \psi_\tau'  + (\psi_\tau' )^T \, J \, \delta'_\tau \right) + {\cal O}(\tau^{2(q+1)}),
\end{align*}
which implies that 
$$
\tau^{q+1} \left(  (\delta'_\tau )^T \, J \, \psi_\tau'  + (\psi_\tau' )^T \, J \, \delta'_\tau \right) =  {\cal O}(\tau^{\min(2(q+1),r+1)}).
$$
As an immediate consequence, we have that 
\begin{align*}
(R_\tau')^T J R_\tau' 
=J +{\cal O}(\tau^{\min(2(q+1),r+1)})
\end{align*}
which proves the second statement.
\end{proof}
\\ \\
This result can be rendered more specific as follows: \nopagebreak
\begin{proposition} \label{prop.2}
Let $\psis_{ \tau}^{[2n]}$ be a smooth method of order $2n \geq 2$ and pseudo-symmetry order $q \geq 2n+1$. Let us consider 
the composition method
\begin{equation}   \label{compo.2}
   \psi_{\tau}^{[2n+1]} = \psis_{\gamma_1 \tau}^{[2n]} \circ \, \psis_{\gamma_2 \tau}^{[2n]},
\end{equation}
where the coefficients $\gamma_1$ and $\gamma_2$ satisfy both relations $\gamma_1 + \gamma_2 = 1$ and $\gamma_1^{2n+1} + \gamma_2^{2n+1} = 0$. Then the method
\begin{equation}  \label{compo.3}
   \hat R_{\tau} =\frac{1}{2} \left( \psi_{\tau}^{[2n+1]} + \overline{\psi}_{\tau}^{[2n+1]} \right) 
\end{equation}
is of order 
\begin{align}
\label{eq:order}
\left\{
\begin{array}{ll}
2n+1 & \mbox{ if } \; q  = 2n+1,\\
2n+2 & \mbox{ if } \; q  \geq 2n+2
\end{array}
\right.
\end{align}
 when the vector field $f$ in  (\ref{de.1}) is real, and of pseudo-symmetry order 
 \begin{align}
 \label{eq:symetry}
\left\{
\begin{array}{ll}
2n+1 & \mbox{ if } \; q  = 2n+1,\\
\min(q,4n+3) & \mbox{ if } \; q \geq 2n+2.
\end{array}
\right.
\end{align}
If in addition, $f$ is
a (real) Hamiltonian vector field and $\psis_{\tau}^{[2n]}$ is of pseudo-symplecticity order $r$, then $\hat{R}_{\tau}$ is of pseudo-symplecticity order  
 \begin{align}
 \label{eq:symplecticity}
\left\{
\begin{array}{ll}
\min(r,2n+1) & \mbox{ if } \; q  = 2n+1,\\
\min(q,r,4n+3) & \mbox{ if } \; q \geq 2n+2.
\end{array}
\right.
\end{align}
\end{proposition}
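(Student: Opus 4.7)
The plan is to deduce the properties of $\hat R_\tau$ from the general symmetrization $R_\tau = \tfrac{1}{2}\bigl(\psi_\tau^{[2n+1]} + (\psi_\tau^{[2n+1]})^*\bigr)$, to which Proposition~\ref{prop:main} applies directly. First I would observe that any complex solution of the order conditions $\gamma_1+\gamma_2=1$, $\gamma_1^{2n+1}+\gamma_2^{2n+1}=0$ satisfies $|\gamma_2/\gamma_1|=1$, which combined with the first condition forces $\gamma_2=\bar\gamma_1$. Since $f$ is real, $\overline{\psis_{c\tau}^{[2n]}} = \psis_{\bar c\tau}^{[2n]}$, so complex conjugation swaps the two stages:
\[
 \overline{\psi_\tau^{[2n+1]}} = \psis_{\gamma_2\tau}^{[2n]} \circ \psis_{\gamma_1\tau}^{[2n]}.
\]
Writing the adjoint of the composition as $(\psis_{\gamma_2\tau}^{[2n]})^* \circ (\psis_{\gamma_1\tau}^{[2n]})^*$ and invoking pseudo-symmetry of $\psis^{[2n]}$ of order $q$ on each stage, I obtain the key identity
\[
 (\psi_\tau^{[2n+1]})^* = \overline{\psi_\tau^{[2n+1]}} + \mathcal{O}(\tau^{q+1}),
\]
from which $\hat R_\tau = R_\tau + \mathcal{O}(\tau^{q+1})$ follows at once.

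Next I would determine the pseudo-symmetry order of $\psi_\tau^{[2n+1]}$, which by the order conditions is of order $2n+1$. Since the target flow $\varphi_\tau$ is real, the imaginary part of $\psi_\tau^{[2n+1]}$ is $\mathcal{O}(\tau^{2n+2})$, hence $\overline{\psi_\tau^{[2n+1]}} - \psi_\tau^{[2n+1]} = \mathcal{O}(\tau^{2n+2})$; combined with the identity above this yields $(\psi_\tau^{[2n+1]})^* - \psi_\tau^{[2n+1]} = \mathcal{O}(\tau^{\min(q+1,\,2n+2)})$. The standard rule that the principal error of the adjoint of a method of odd order $p$ is the opposite of that of the method then shows the bound $q'=2n+1$ is sharp (the $\tau^{2n+2}$-coefficient of the defect does not vanish generically, irrespective of whether $q=2n+1$ or $q\geq 2n+2$). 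Proposition~\ref{prop:main} applied to $\psi_\tau^{[2n+1]}$ therefore gives $R_\tau$ pseudo-symmetry order $2q'+1 = 4n+3$, and—since $\psi_\tau^{[2n+1]}$ inherits pseudo-symplectic order $r$ from $\psis^{[2n]}$ via the standard composition argument for pseudo-symplectic maps—$R_\tau$ has pseudo-symplectic order $\min(4n+3,r)$.

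It remains to transfer these properties from $R_\tau$ to $\hat R_\tau = R_\tau + \mathcal{O}(\tau^{q+1})$. The same sign-flip rule causes the principal $\tau^{2n+2}$-errors of $\psi_\tau^{[2n+1]}$ and its adjoint to cancel in $R_\tau$, so $R_\tau$ is of order $2n+2$; combined with the defect, $\hat R_\tau$ is of order $\min(2n+2,q)$, yielding \eqref{eq:order}. For pseudo-symmetry, one has $\hat R_\tau^* - \hat R_\tau = (R_\tau^* - R_\tau) + \mathcal{O}(\tau^{q+1}) = \mathcal{O}(\tau^{\min(q+1,\,4n+4)})$, matching \eqref{eq:symetry}; analogously, differentiating the defect gives $(\hat R_\tau')^T J \hat R_\tau' - (R_\tau')^T J R_\tau' = \mathcal{O}(\tau^{q+1})$, so the pseudo-symplecticity order of $\hat R_\tau$ is $\min(q,r,4n+3)$, matching \eqref{eq:symplecticity}. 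The main delicate step is the sharp determination $q'=2n+1$ for $\psi_\tau^{[2n+1]}$ and the careful propagation of the $\mathcal{O}(\tau^{q+1})$ defect through adjoint and derivative; the rest is bookkeeping with powers of $\tau$ combined with direct applications of Proposition~\ref{prop:main}.
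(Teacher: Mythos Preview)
Your proof is correct and follows essentially the same route as the paper: establish $\overline{\psi_\tau^{[2n+1]}} = (\psi_\tau^{[2n+1]})^* + \mathcal{O}(\tau^{q+1})$ from the pseudo-symmetry of the base scheme, apply Proposition~\ref{prop:main} to $\psi_\tau^{[2n+1]}$ (pseudo-symmetric of order at least $2n+1$) to obtain the properties of $R_\tau$, and then transfer these to $\hat R_\tau$ via the $\mathcal{O}(\tau^{q+1})$ defect. Your observation that $q'=2n+1$ is \emph{sharp} is not needed (Proposition~\ref{prop:main} requires only a lower bound on the pseudo-symmetry order), but it is harmless; otherwise the argument matches the paper's, with somewhat more explicit bookkeeping on your side.
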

\begin{remark}
Note that in Proposition \ref{prop.2}, one has necessarily $q \geq 2n+1$. This can be seen straightforwardly by a direct computation of 
${\cal S}_{-\tau}^{[2n]} \circ {\cal S}_\tau^{[2n]}(x)$  with 
${\cal S}_\tau^{[2n]}(x) = \varphi_\tau (x)+ \tau^{2n+1} C(x) + {\cal O}(\tau^{2n+2})$.
\end{remark}
\begin{proof}
Noticing that $\gamma_1$ and $\gamma_2$ are complex conjugate and \eqref{de.1} is real,  and taking into account that $\psis_{ \tau}^{[2n]}$ is
of pseudo-symmetry order $q$, we have
\begin{align}
\label{eq:conjadjointq}
\overline{\psi}_{\tau}^{[2n+1]} &= \psis_{\gamma_2 \tau}^{[2n]} \, \circ \, \psis_{\gamma_1 \tau}^{[2n]} = \left((\psis_{\gamma_2 \tau}^{[2n]})^* +{\cal O}(\tau^{q+1})\right)\, \circ \,  \left((\psis_{\gamma_1 \tau}^{[2n]})^*+{\cal O}(\tau^{q+1})\right) \nonumber \\
&= (\psis_{\gamma_2 \tau}^{[2n]})^* \, \circ \,  (\psis_{\gamma_1 \tau}^{[2n]})^* +{\cal O}(\tau^{q+1})= (\psi_{\tau}^{[2n+1]})^* +  {\cal O}(\tau^{q+1}).
\end{align}
Moreover, by construction, $\psi_{\tau}^{[2n+1]}$ is at least of order $2n+1$, so that 
\begin{align}
\label{eq:conjadjointn}
\psi_{\tau}^{[2n+1]} +  {\cal O}(\tau^{2n+2}) = \overline{\psi}_{\tau}^{[2n+1]} = (\psi_{\tau}^{[2n+1]})^* +  {\cal O}(\tau^{2n+2}),
\end{align}
and altogether
\begin{align}
\label{eq:conjadjoint}
\hat R_{\tau}  &= R_{\tau}  +  {\cal O}(\tau^{\max(2n+2,q+1)}).
\end{align}
Now, since the pseudo-symmetry order of $\psi_{\tau}^{[2n+1]}$ is at least $2n+1$, the method 
$$
R_{\tau} = \frac{1}{2} \left( \psi_{\tau}^{[2n+1]} + (\psi_{\tau}^{[2n+1]})^* \right) 
$$
is, according to Proposition \ref{prop:main},  of pseudo-symmetry order $4n+3$ and of pseudo-symplecticity order $\min(4n+3,q)$. The first (\ref{eq:order}), second (\ref{eq:symetry}) and third (\ref{eq:symplecticity}) statements on orders then follow from   (\ref{eq:conjadjoint}).
\end{proof}

\

In the Appendix we provide an alternative proof of Proposition \ref{prop.2} based on the Lie formalism, which allow us,
in addition, to generalise the previous result on pseudo-symplecticity to other geometric properties the continuous system may possess 
(such as in volume preserving flows, isospectral flows, differential equations evolving on Lie groups, etc.).

Notice that, according with Proposition \ref{prop.2}, if we start from  $n=1$, that is to say from a basic symmetric ($q=+\infty$) and symplectic ($r=+\infty$) method of order $2$, we get a method of order $4$ that is pseudo-symmetric and
pseudo-symplectic of order $7$ just by considering the simple composition (\ref{compo.2}) and taking the real part of the output at each time step. 
If this technique is applied to a symmetric and symplectic method of order $4$, i.e. with $n=2$, then $\hat R_{\tau}$ 
is of order $6$ and pseudo-symmetric and
pseudo-symplectic of order $11$. 

Let us consider, in particular, the $4^{th}$-order symmetric scheme (\ref{suzu_n}) with $k=2$ as basic scheme. Then, the resulting $6^{th}$-order integrator $\hat R_{\tau}$ only
requires the evaluation of $6$ second-order methods $\psis_{\tau}^{[2]}$, whereas the corresponding $6^{th}$-order scheme obtained by the triple-jump technique 
involves $9$ evaluations. This number is reduced to $7$ by considering general compositions of $\psis_{\tau}^{[2]}$ \cite{blanes13oho}. If we take this $6^{th}$-order
composition of $7$ schemes as the basic method $\psis_{\tau}^{[6]}$, the resulting integrator of order $8$, $\hat R_{\tau}$, involves the evaluation of $14$ $\psis_{\tau}^{[2]}$, 
whereas $15$ evaluations are required by pure composition methods. Notice that  $\hat R_{\tau}$ is
pseudo-symmetric and pseudo-symplectic of order 15, so that for values of $\tau$ sufficiently small, it preserves effectively the symmetry up to round-off error while the drift in energy for Hamiltonian systems is hardly noticeable.

\section{Families of pseudo-symplectic methods}

There is another possibility to increase the order, though, and it consists in applying the technique of Proposition \ref{prop.2} recursively. 
Thus, if denote by $\hat{R}_{\tau}^{(1)} \equiv \hat{R}_{\tau}$ the method of eq. (\ref{compo.3}), we propose to apply the following recurrence:
\begin{equation}   \label{foit}
\begin{aligned}
   & \mbox{For } \, i = 2, 3, \ldots \\
   &  \qquad  \qquad \Phi_{\tau} ^{(i)}= \hat R_{\gamma^{[2i]} \tau}^{(i-1)} \circ \hat R_{\bar{\gamma}^{[2i]} \tau}^{(i-1)}  \\
   &  \qquad   \qquad \hat{R}_{\tau}^{(i)}=    \frac{1}{2} \left( \Phi_{\tau}^{(i)} + \overline{\Phi}_{\tau}^{(i)} \right) 
\end{aligned}
\end{equation}
where $\gamma^{[2i]}$ is given by (\ref{eq:gam}).
 Then, according with Proposition \ref{prop.2}, it is possible to raise the order up to the pseudo-symmetry order of the 
 underlying basic method $\psis_{\tau}^{[2n]}$. Thus, in particular, the maximum order one can achieve by applying this technique to 
the basic symmetric method $ \psis_{\tau}^{[2]}$ is 7, whereas if we start with a basic symmetric method of order 4, $ \psis_{\tau}^{[4]}$, the maximum
order is $11$. It is $15$ from a symmetric method $ \psis_{\tau}^{[6]}$ of order $6$ and so on and so forth.  

To give an assessment of the computational cost of the methods obtained by applying this type of composition, we notice that
the computation of $\Phi_{\tau}^{(i)}$ and $\overline{\Phi}_{\tau}^{(i)}$ required to form $\hat{R}_{\tau}^{(i)}$ by (\ref{foit}) at the intermediate stages
can be done in parallel, whereas at the final stage it only requires taking the real part. Thus, the method of order 6 constructed recursively from $ \psis_{\tau}^{[2]}$ only requires
the effective computation of 4 basic methods $\psis_{\tau}^{[2]}$. 

Starting from a symmetric second-order method $\psis_{\tau}^{[2]}$, say Strang splitting for instance, it is important to monitor the sign of the real part of all coefficients involved in the previous iteration. It is immediate to see that in the recursive construction   
$$
\psis_{\tau}^{[2]} \rightarrow \hat{R}_{\tau}^{(1)}  \rightarrow \hat{R}_{\tau}^{(2)}  \rightarrow \hat{R}_{\tau}^{(3)}
$$
envisaged in the recurrence (\ref{foit}), the basic method $\psis_{\tau}^{[2]}$ is used with the following coefficients
\begin{align*}
i=1:  & \; \gamma^{[2]} , \quad \bar{\gamma}^{[2]}  \\
i=2:  & \; \gamma^{[4]} \gamma^{[2]} , \quad \bar{\gamma}^{[4]} \gamma^{[2]} , \quad \gamma^{[4]} \bar{\gamma}^{[2]} , \quad \bar{\gamma}^{[4]} \bar{\gamma}^{[2]}  \\
i=3:  & \; \gamma^{[6]}  \gamma^{[4]} \gamma^{[2]} , \; \gamma^{[6]} \bar{\gamma}^{[4]} \gamma^{[2]} , \; \gamma^{[6]} \gamma^{[4]} \bar{\gamma}^{[2]} , \; \gamma^{[6]} \bar{\gamma}^{[4]} \bar{\gamma}^{[2]} , \; \bar{\gamma}^{[6]}  \gamma^{[4]} \gamma^{[2]} , \; \bar{\gamma}^{[6]} \bar{\gamma}^{[4]} \gamma^{[2]} , \; \bar{\gamma}^{[6]} \gamma^{[4]} \bar{\gamma}^{[2]} , \; \bar{\gamma}^{[6]} \bar{\gamma}^{[4]} \bar{\gamma}^{[2]} 
\end{align*} 
Given the expression of $\gamma^{[k]}$ (see (\ref{eq:gam})), these coefficients have arguments of the form 
$$
\frac{\pi}{2} \sum_{j=1}^i \pm \frac{1}{2j+1} = \frac{\pi}{2} \Big (\pm \frac13 \pm  \frac15 \pm  \cdots \pm \frac{1}{2i+1} \Big), \qquad i=1,2,3,
$$
so that their maximum argument is 
$$
\frac{\pi}{2} \sum_{j=1}^3 \frac{1}{2j+1}.
$$
For all the coefficients to have positive real parts, a necessary and sufficient condition is thus that 
$$
\sum_{j=1}^3 \frac{1}{2j+1} \leq 1.
$$
It clearly holds for methods $\hat{R}_{\tau}^{(1)} $, $\hat{R}_{\tau}^{(2)} $ and $\hat{R}_{\tau}^{(3)}$, of respective orders $4$, $6$ and $7$, since 
$1/3+1/5+1/7=71/105$. Similarly, starting form a symmetric method of order $4$ having real or complex coefficients with maximum argument $\theta_4$, the condition becomes 
$$
\frac{2 \theta_4}{\pi} + \sum_{j=1}^4 \frac{1}{2j+3} = \frac{2 \theta_4}{\pi}  + \frac{1888}{3465} \leq 1.
$$
For instance, suppose that $f$ in (\ref{de.1}) can be split as $f(x) = f_a(x) + f_b(x)$, so that the exact $\tau$-flows $\varphi_{\tau}^{[a]}$ and
$\varphi_{\tau}^{[b]}$ corresponding to $f_a$ and $f_b$, respectively, can be computed exactly. Then, the following composition
\begin{equation}  \label{s4sim}
  \psis_{\tau}^{[4]} = \varphi_{b_1 \tau}^{[b]} \circ \,  \varphi_{a_1 \tau}^{[a]} \circ \, \varphi_{b_2 \tau}^{[b]} \circ \,  \varphi_{a_2 \tau}^{[a]} \circ \, 
  \varphi_{b_3 \tau}^{[b]} \circ \,  \varphi_{a_2 \tau}^{[a]} \circ \, \varphi_{b_2 \tau}^{[b]} \circ \,  \varphi_{a_1 \tau}^{[a]} \circ \, \varphi_{b_1 \tau}^{[b]} 
\end{equation}
with 
\[
   b_1=\frac{1}{10}-\frac{1}{30} i,  \quad b_2=\frac{4}{15}+\frac{2}{15} \, i, \quad b_3={\frac{4}{15}}-\frac15 i \quad
   \mbox{  and } \quad  a_1=a_2=a_3=a_4 = \frac{1}{4}
\]
provides a 4th-order symmetric scheme (see \cite{castella09smw}). Taking  (\ref{s4sim}) as basic method 
we get $\max_{i=1,2,3} \mathrm{Arg}(b_i) = \arccos \left( 4/5 \right) $ so that 
$$
\frac{2 \theta_4}{\pi}  + \frac{1888}{3465} < 0.409666 + \frac{1888}{3465}< 0.96 <  1
$$
and thus all methods 
$$
\psis_{\tau}^{[4]} \rightarrow \hat{R}_{\tau}^{(1)}  \rightarrow \hat{R}_{\tau}^{(2)}  \rightarrow \hat{R}_{\tau}^{(3)} \rightarrow \hat{R}_{\tau}^{(4)}
$$
of respective orders $4$, $6$, $8$, $10$ and $11$ obtained by the procedure  (\ref{foit}) have  all their coefficients with positive real parts. As far as the $f_a$ part is concerned, the maximum argument is less than $0.55 \frac{\pi}{2}$.

\section{Numerical experiments}
%

In this section we illustrate the previous results on several numerical examples, comprising Hamiltonian systems and partial differential equations of
evolution previously discretised in space. 

\subsection{Harmonic oscillator}

We first consider the simple
harmonic oscillator, with Hamiltonian 
\[
H=T(p)+V(q)=\frac{1}{2}p^{2}+\frac{1}{2}q^{2}. 
\]
If we denote by $M_{X}(\tau)$ the exact matrix evolution associated with the Hamiltonians $X=H$, $T$
and $V$, i.e., $(q(\tau),p(\tau))^{T}=M_{X}(\tau)(q(0),p(0))^{T}$, then
\[
M_{H}(\tau)=\left( 
\begin{array}{rr}
\cos (\tau) & \sin (\tau) \\ 
-\sin (\tau) & \cos (\tau)
\end{array}
\right) ,\quad M_{T}(\tau)=\left( 
\begin{array}{ll}
1 & \tau \\ 
0 & 1
\end{array}
\right) ,\quad M_{V}(\tau)=\left( 
\begin{array}{rr}
1 & 0 \\ 
-\tau & 1
\end{array}
\right), 
\]
respectively. We take as basic symmetric (and symplectic) scheme the leapfrog/Strang splitting:
\begin{equation}   \label{strang.ho}
\psis_{\tau}^{[2]} = M_{T}(\tau/2)M_{V}(\tau)M_{T}(\tau/2)
\end{equation}
and compute the first three iterations in (\ref{foit}). In Table \ref{table.1} we collect the main term
in the truncation error for the resulting integrators $\hat{R}_{\tau}^{(i)}$, $i=1,2,3$. 
We also check their time-symmetry and the preservation of the symplectic character of the approximate solution matrix
by computing its determinant (a $2 \times 2$ matrix $A$ is symplectic iff $\det(A) = 1$). One can observe that these results are in agreement with the previous estimates.

\begin{table}[t]  
\centering
{\footnotesize 
\begin{tabular}{|c|ccc|}   \hline
$\psi_{\tau}$ & $M_{H}(\tau)- \psi_{\tau}$ & $\psi_{\tau} \circ \psi_{-\tau} - I_2$ & $\det
\left( \psi_{\tau} \right)$ \\ \hline\hline
$\hat{R}_{\tau}^{(1)}$ & $\left( 
\begin{array}{cc}
0 & -\frac{1}{180} \\ 
-\frac{1}{120} & 0
\end{array}
\right) \tau^{5}$  & $\left( 
\begin{array}{cc}
-\frac{1}{1728} & 0 \\ 
0 & -\frac{1}{1728}
\end{array}
\right) \tau^{8}$ & $1 - \frac{1}{1728}\tau^{8}$ \\ 
$\hat{R}_{\tau}^{(2)}$ & $\left( 
\begin{array}{cc}
0 & 3.8\,\times 10^{-5} \\ 
5.1\times 10^{-5} & 0
\end{array}
\right) \tau^{7}$ 
 & $\left( 
\begin{array}{cc}
5.4\, \times 10^{-6}  & 0 \\ 
 0 & 5.4\times 10^{-6}
\end{array}
\right) \tau^{8}$ & $1+ 5.4\times 10^{-6} \, \tau^{8}$ \\ 
$\hat{R}_{\tau}^{(3)}$ & $\left( 
\begin{array}{cc}
5.8\,\times 10^{-9} & 0 \\ 
0 & 5.8\,\times 10^{-9}
\end{array}
\right) \tau^{8}$  & 
$\left( 
\begin{array}{cc}
-1.\,1\times 10^{-8} & 0 \\ 
0 & -1.\,1\times 10^{-8}
\end{array}
\right) \tau^{8}$ & $1-1.\,1\times 10^{-8} \, \tau^{8}$ \\ \hline
\end{tabular}
}
\caption{\small
Main term in the truncation error, degree of  symmetry and symplecticity for schemes $\hat{R}_{\tau}^{(i)}$
obtained from the basic leapfrog integrator for the simple harmonic oscillator. $I_2$ stands for the $2 \times 2$ identity matrix.
\label{table.1}}
\end{table}

Next we take initial conditions $(q,p) = (2.5,0)$, integrate until the final time $t_f = 10^4$ with $\psis_{\tau}^{[2]}$, $\hat{R}_{\tau}^{(1)}$,  and
$\hat{R}_{\tau}^{(2)}$ and compute the relative error in energy along the evolution. The result is depicted in Figure \ref{figu.ho}. We see
that for $\hat{R}_{\tau}^{(1)}$ and $\hat{R}_{\tau}^{(2)}$ the error in energy is almost constant 
for a certain
period of time, and then there is a secular growth proportional to $\mathcal{O}(\tau^7)$.

\begin{figure}[h!] 
\begin{center}
\includegraphics[width=15.5cm, height=10.7cm]{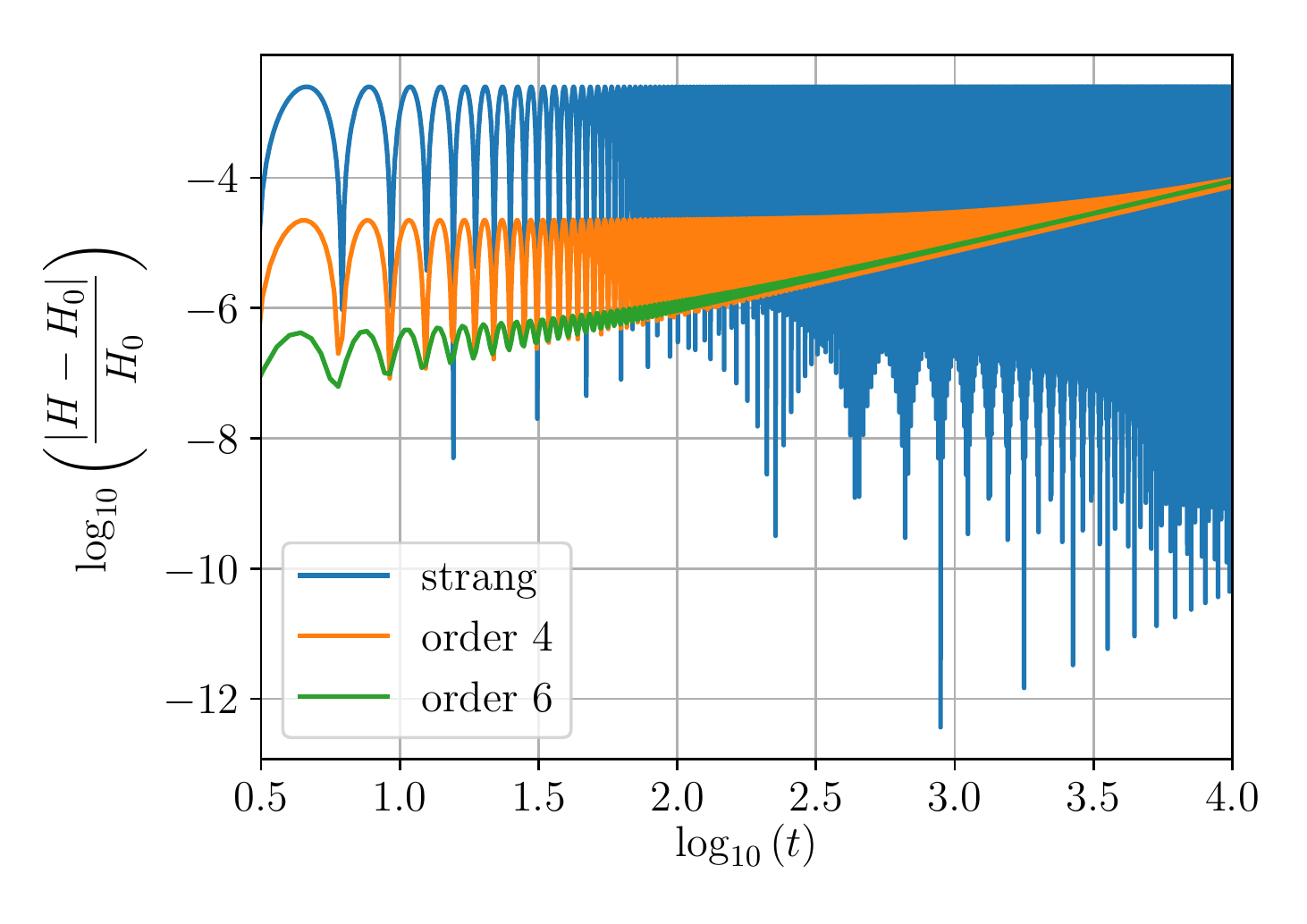}
 
\end{center}
\caption{\small Error in energy along the integration for the harmonic oscillator taking (\ref{strang.ho}) as the basic integrator in the
sequence (\ref{foit}). \label{figu.ho}}
\end{figure}

\subsection{Kepler problem}
Next, we consider  the two-dimensional
Kepler problem with Hamiltonian
\begin{equation}   \label{eq.HamKepler}
   H(q,p) = T(p) + V(q) = 
	\frac{1}{2} p^T p - \mu \frac{1}{r}.
\end{equation}
Here $q=(q_1,q_2), p=(p_1,p_2)$, $\mu=GM$, $G$ is the gravitational constant and $M$ is the sum of the masses of the two bodies. Taking $\mu=1$ and initial conditions
\begin{equation}\label{eq.1.12}
  q_1(0) = 1- e, \quad q_2(0) = 0, \quad p_1(0) = 0, \quad p_2(0) = \sqrt{\frac{1+e}{1-e}},
\end{equation}
if $0 \le e < 1$, then the solution is periodic with period $2 \pi$, and the trajectory is an ellipse of eccentricity $e$. Note that the gradient function must here be implemented carefully so as to be analytic for complex values of $z=q_1^2+q_2^2$ . Here, we define it using the following determination of the complex logarithm (analytic on the complex plane outside the negative real axis):
$$
\forall \, (x,y) \in \R^2 \mbox{ s.t. } x+i y \notin \R_{-}, \qquad L(x+iy) = \log|x+iy| + 2 i \arctan\left(\frac{y}{x+|x+i y|}\right).
$$
As a consequence, the analytic continuation of the function $1/r^3=1/(q_1^2+q_2^2)^{3/2}$ writes
$$
\exp\Big(-\frac32 L(x+iy)\Big),
$$
where $x=\Re(q_1^2+q_2^2)$ and $y=\Im(q_1^2+q_2^2)$. 

Here, as with the harmonic oscillator, we take as basic method the 2nd-order Strang splitting
\begin{equation}  \label{strang1}
    \mathcal{S}_{\tau}^{[2]} = \varphi_{\tau/2}^{[a]} \circ \, \varphi_{\tau}^{[b]} \circ \, \varphi_{\tau/2}^{[a]},
\end{equation}
where $\varphi_{\tau}^{[a]}$ (respectively, $\varphi_{\tau}^{[b]}$) corresponds to the exact solution obtained by integrating the kinetic 
energy $T(p)$ (resp., potential energy $V(q)$) in  (\ref{eq.HamKepler}).
   
We take $e=0.6$, integrate until the final time $t= 20$ with Strang and the schemes obtained by the recursion
(\ref{foit}) with $i=1,2,3$ for several time steps and compute the relative error in energy at the final time. Figure \ref{fig:kepler}
show this error as a function of the inverse of the step size $1/\tau$ to illustrate the order of convergence: order 2 for Strang, order 4 for
$\hat{R}_{\tau}^{(1)}$ and order 6 for $\hat{R}_{\tau}^{(2)}$. For $\hat{R}_{\tau}^{(3)}$, and contrary to what happens to the harmonic oscillator, the observed numerical order is higher than expected, varying between 7 and 8. We do not have at present a theoretical explanation for this
phenomenon. Figure \ref{fig:kepler} (right) depicts the time evolution of this error when the final time is $t=10^4$.

\begin{figure}[h!] 
\begin{center}
\includegraphics[width=7.5cm, height=5.7cm]{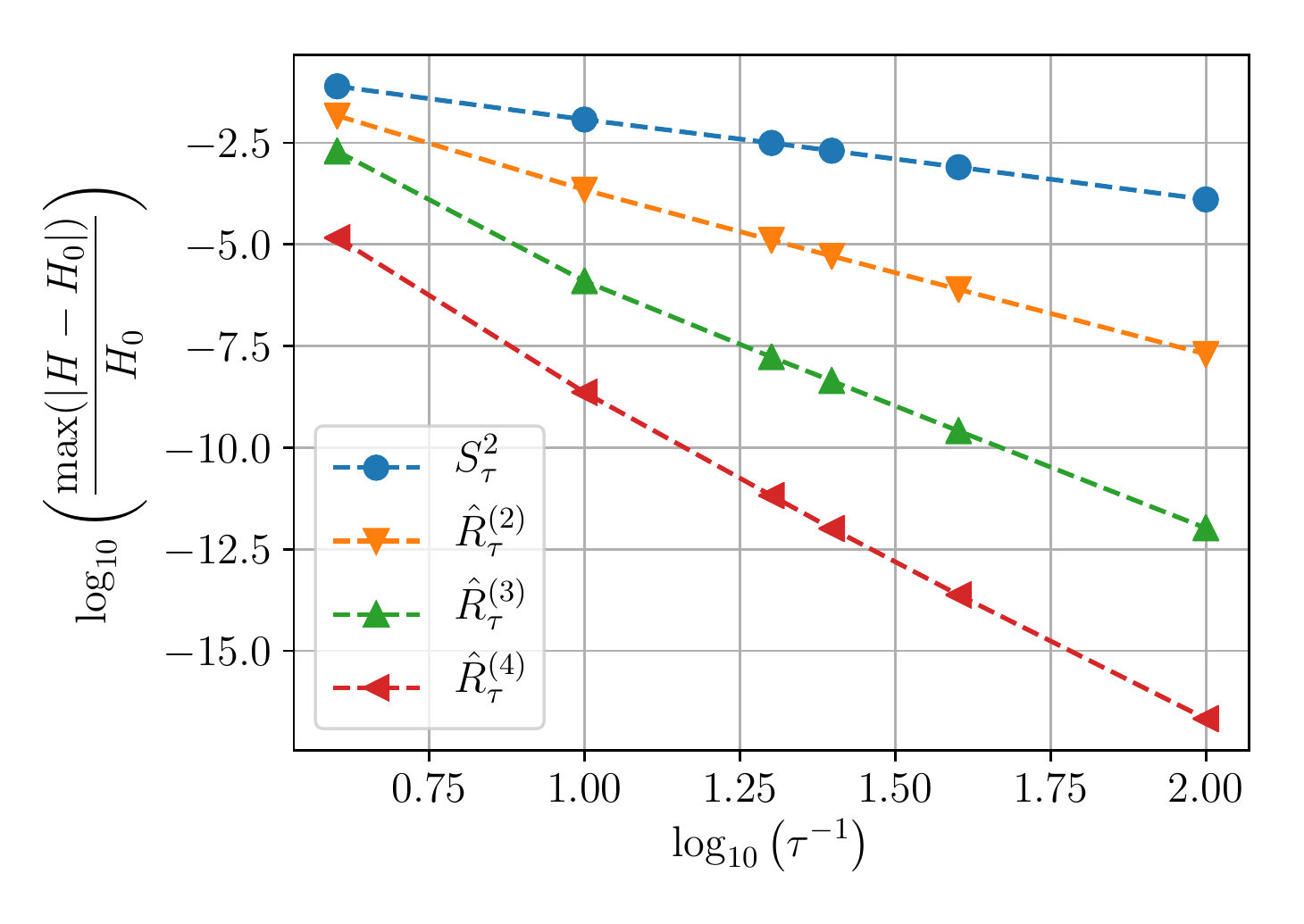}
\includegraphics[width=7.5cm, height=5.7cm]{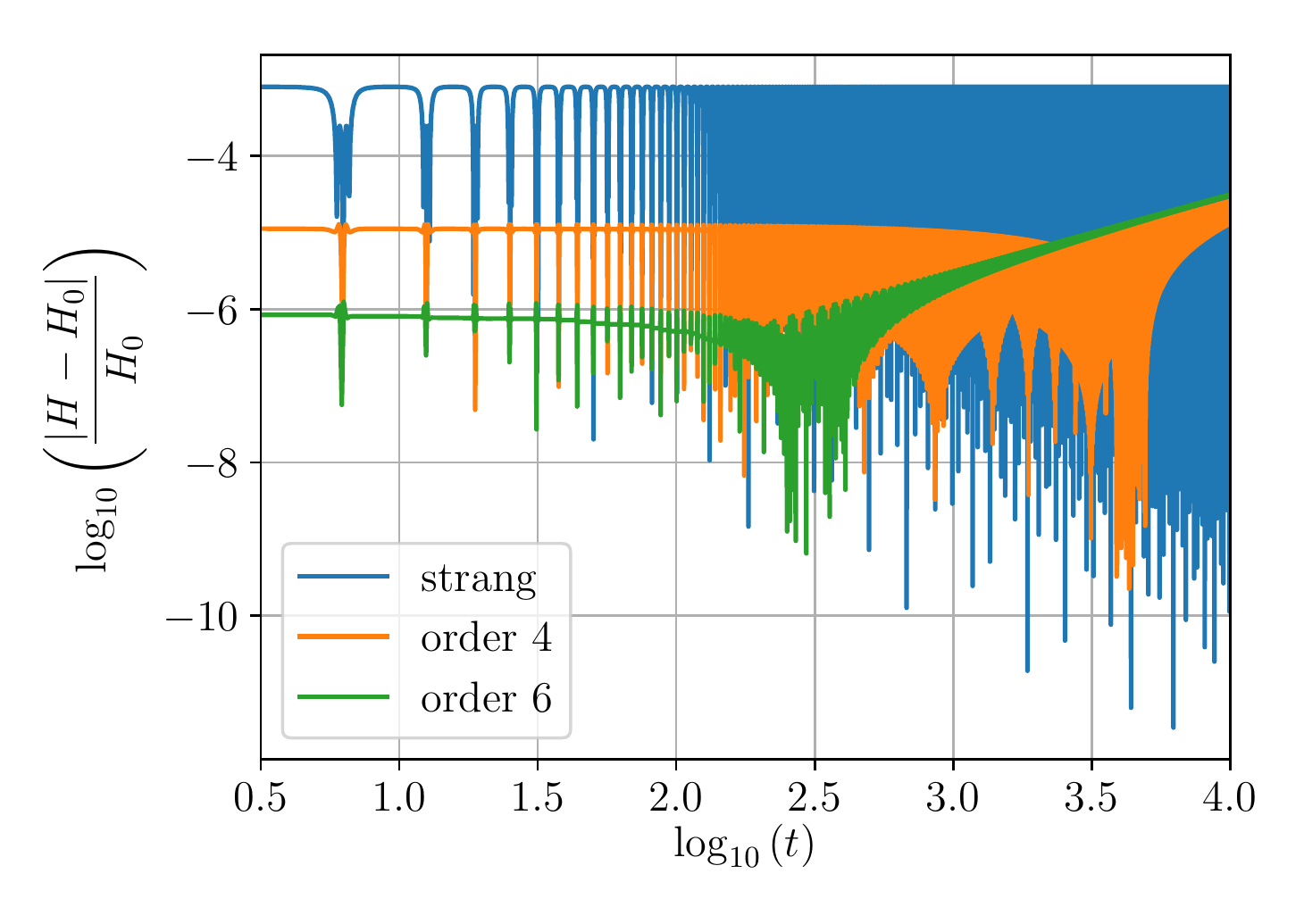} 
\end{center}
\caption{\small Left figure: Relative error in energy vs. the inverse of the step size $\tau$ after approximately $3.183$ periods ($t=20$) for the Kepler problem for the
schemes obtained by the recurrence (\ref{foit}). Right figure:
Evolution of this error along the integration. \label{fig:kepler}}
\end{figure}

\subsection{The semi-linear reaction-diffusion equation of Fisher}
Our third test-problem is  the scalar equation in one-dimension 
\begin{eqnarray} \label{eq:nlrd}
\frac{\partial u(x,t)}{\partial t}  = \Delta u(x,t)  + F(u(x,t)) ,
\end{eqnarray}
with periodic boundary conditions on the interval $[0,1]$. Here $F(u)$ is a \emph{nonlinear} reaction term. For the purpose of testing our methods, we take Fisher's potential \cite{sari15fie}
$$F(u)=u(1-u)$$ 
as considered for example in \cite{blanes13oho}. 

The splitting corresponds here to solving, on the one hand,  the linear equation with the Laplacian (as $f_a$), and on the other hand, the non-linear ordinary differential equation
$$
\frac{\partial u(x,t)}{\partial t} = u(x,t)(1-u(x,t)), 
$$
with initial condition $u(x,0)=u_0(x)$, whose analytical solution is given by the well-defined (for small enough complex time $t$) formula
$$
u(x,t) = u_0(x)+ u_0(x)(1-u_0(x))\frac{(\e^t-1)}{1+u_0(x) (\e^t-1)}.
$$

Here we aim to solve Eq. \eqref{eq:nlrd} with periodic boundary conditions on the interval $[0,1]$, and  initial condition $u_0(x)  = \sin(2\pi x)$. Numerically, the interval  is discretised on a uniform grid, i.e., $x_j = j/N, j  =0,\ldots, N-1,~ N\in \mathbb N$, and $u(x,t)$ is approximated by Fourier pseudo-spectral methods. 
In this way we construct a vector $\mathbf{u}$ with components $(\mathbf{u})_j \approx u(x_{j-1},t)$, $j=1,2,\ldots, N$. If we denote by $\mathbf{u}_{\tau}$
the whole numerical solution computed by a certain integrator with step size $\tau$ from $t=0$ until the final time, and by $\mathbf{u}_{\tau/2}$ the corresponding numerical solution computed by
the same integrator with
step size $\tau/2$, then the quantity $E_\tau := \|\mathbf{u}_\tau- \mathbf{u}_{\tau/2}\|_\infty$ is a good indicator of the convergence order.

Numerical simulations were carried out in quadruple precision (with Intel Fortran) such that roundoff errors are suppressed.  Figure \ref{fig:nlrd} shows the successive errors $E_{\tau}$, at final time $T =10$, of the methods obtained with the sequence (\ref{foit}) with the Strang splitting as the basic method $\mathcal{S}_{\tau}^{[2]}$ (left)
and  the fourth order scheme $\mathcal{S}_{\tau}^{[4]}$  given by (\ref{s4sim}) (right)
with different time steps $\tau_j = 0.1/2^j, ~j = 1,\ldots,7$. One can clearly observe  that the convergence order matches the previous analysis with a slightly better performance for the highest order, analogously to the Kepler problem.  
Figure \ref{fig:nlrd:cost} shows the successive errors versus the number of basic integrators in each case. 

\begin{figure}
\begin{center}
\scalebox{0.9}{\includegraphics{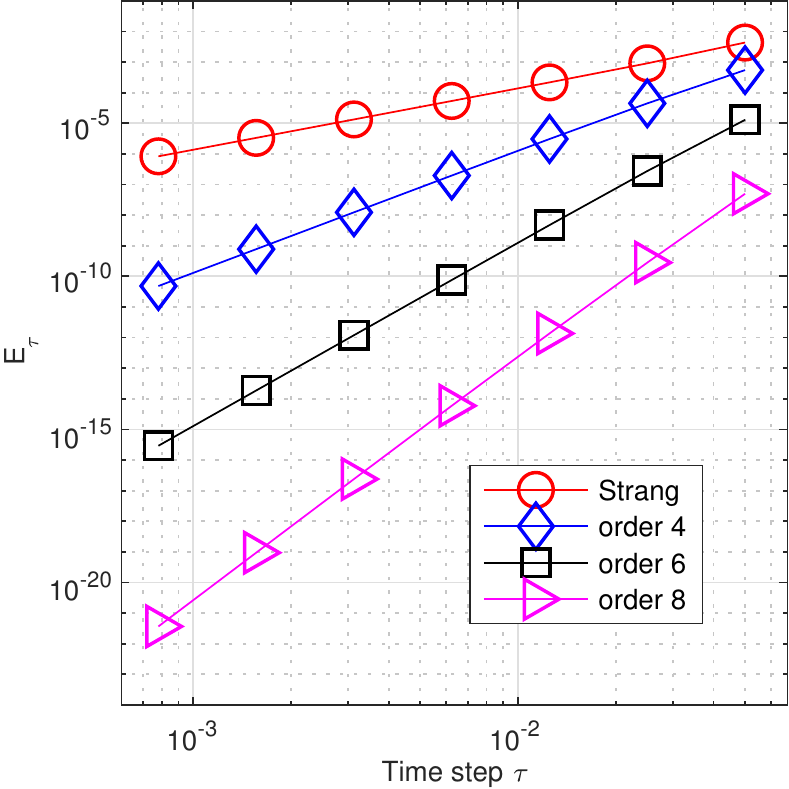}} \scalebox{0.9}{\includegraphics{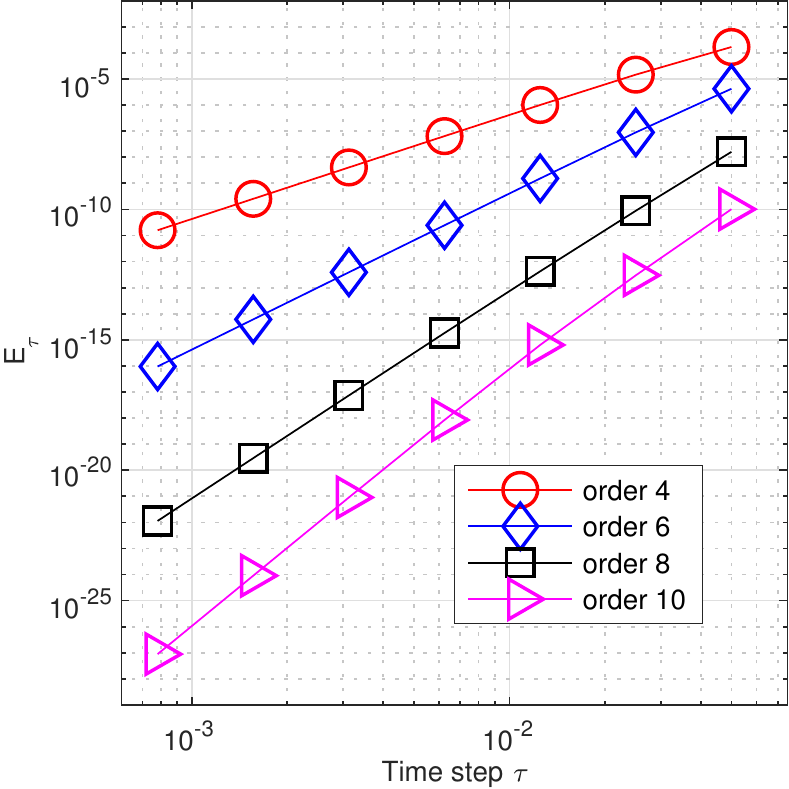}}
\end{center}
\caption{\small Successive errors $E_\tau$ versus time step $\tau$ for Eq. \eqref{eq:nlrd} of the composition methods starting from the Strang scheme (left) and the fourth order scheme $\mathcal{S}_{\tau}^{[4]}$ (right). \label{fig:nlrd}}
\end{figure}  

\begin{figure}
\begin{center}
\scalebox{0.9}{\includegraphics{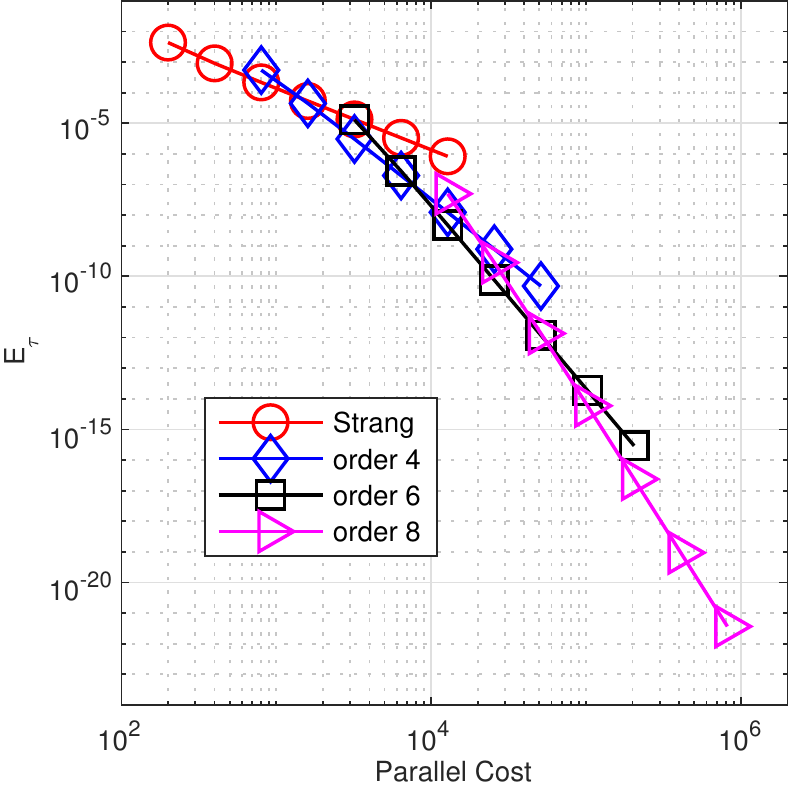}} \scalebox{0.9}{\includegraphics{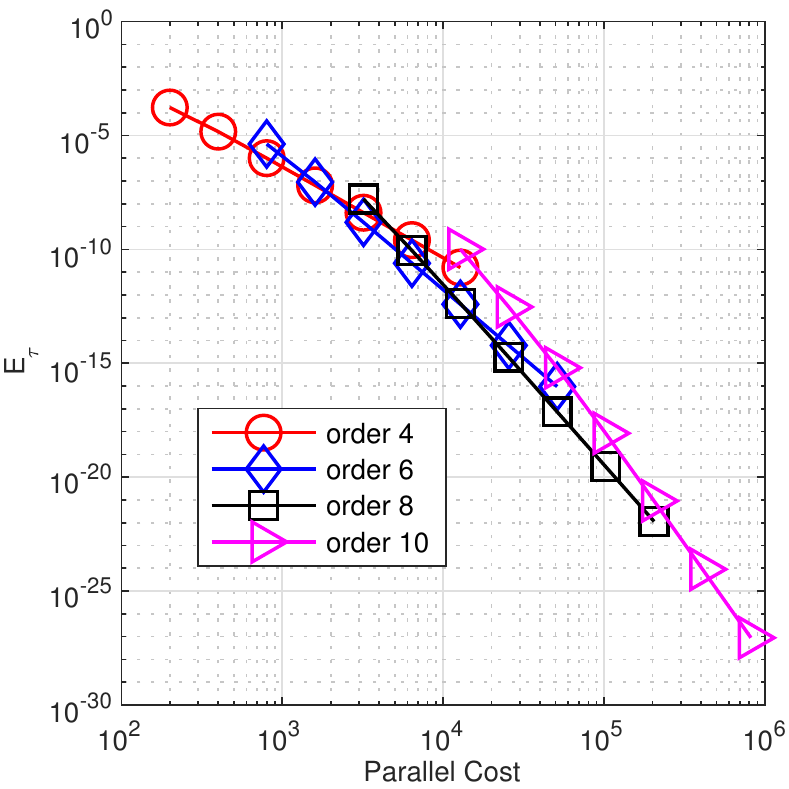}}
\end{center}
\caption{\small Successive errors $E_\tau$ versus number of basic integrators for Eq. \eqref{eq:nlrd} of the composition methods starting from the Strang scheme (left) and the fourth order scheme $\mathcal{S}_{\tau}^{[4]}$ (right). \label{fig:nlrd:cost}}
\end{figure}

%
%
%

\subsection{The semi-linear complex Ginzburg--Landau equation}
Our final test problem is the complex Ginzburg--Landau equation on the domain $(x,t) \in [-100,100] \times [0,100]$,
\begin{eqnarray} \label{eq:CGL}
\frac{\partial u(x,t)}{\partial t}  &=& \alpha \Delta u(x,t) + \varepsilon u(x,t) - \beta|u(x,t)|^2 u(x,t),
\end{eqnarray}
with $\alpha=1+i c_1$, $\beta=1-i c_3$ and initial condition $u(x,0) = u_0(x)$. Here,  $\varepsilon$, $c_1$ and $c_3$ denote \emph{real} coefficients. In physics, the Ginzburg-Landau appears in the  mathematical theory used to model superconductivity. For a broad introduction to the rich dynamics of this equation, we refer to \cite{saarloos95tcg}. Here, we will use the values $c_1=1$, $c_3=-2$ and $\varepsilon=1$, for which plane wave solutions establish themselves quickly after a transient phase (see \cite{winterbottom05opf}). In addition, we set  
$$
u_0(x) = \frac{0.8}{\cosh(x-10)^2} + \frac{0.8}{\cosh(x+10)^2},
$$
so that the solution can be represented in Figure \ref{fig:CGL}. 

\begin{figure}[h!] 
\begin{center}
\scalebox{0.428}{\includegraphics{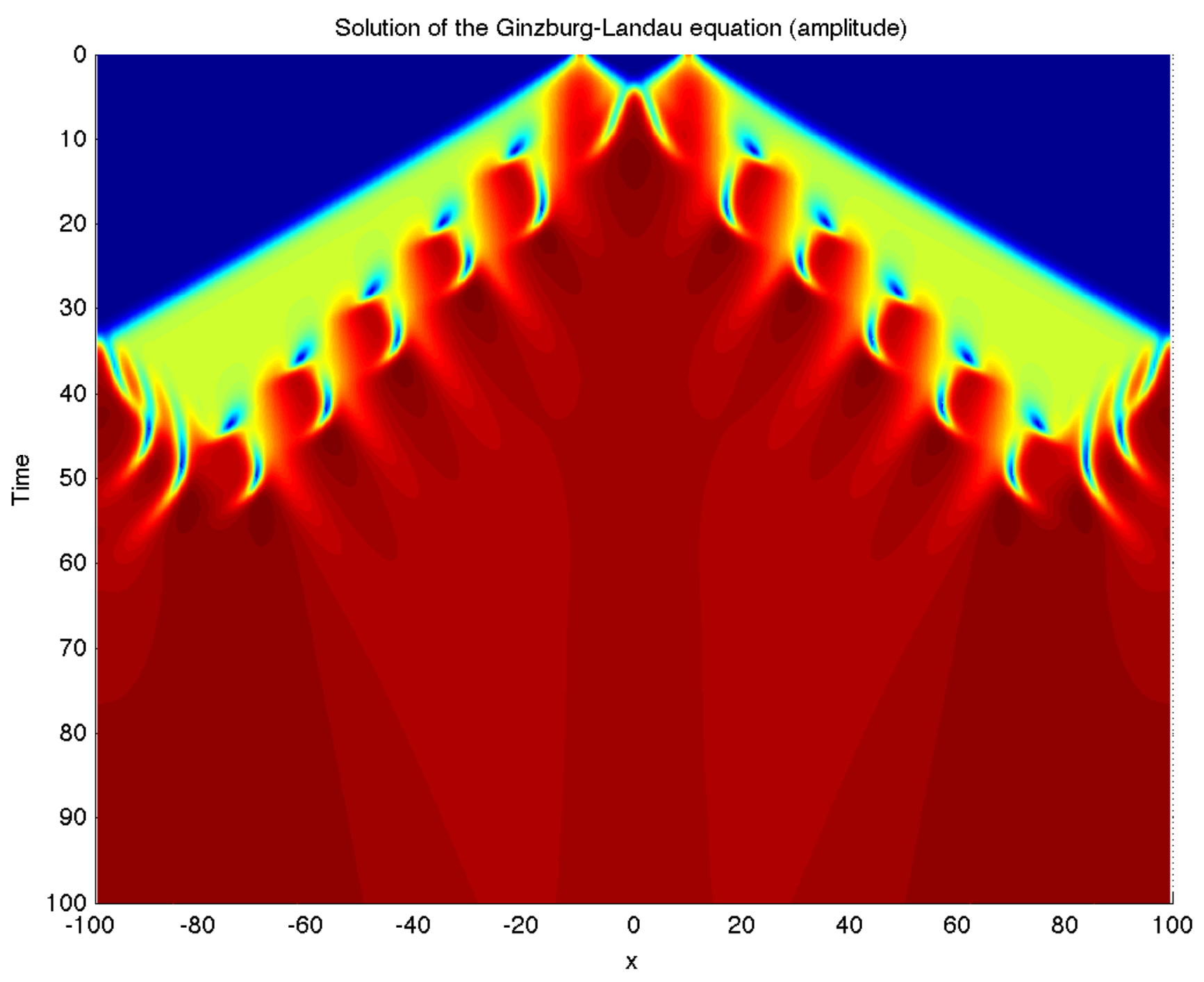}} \scalebox{0.428}{\includegraphics{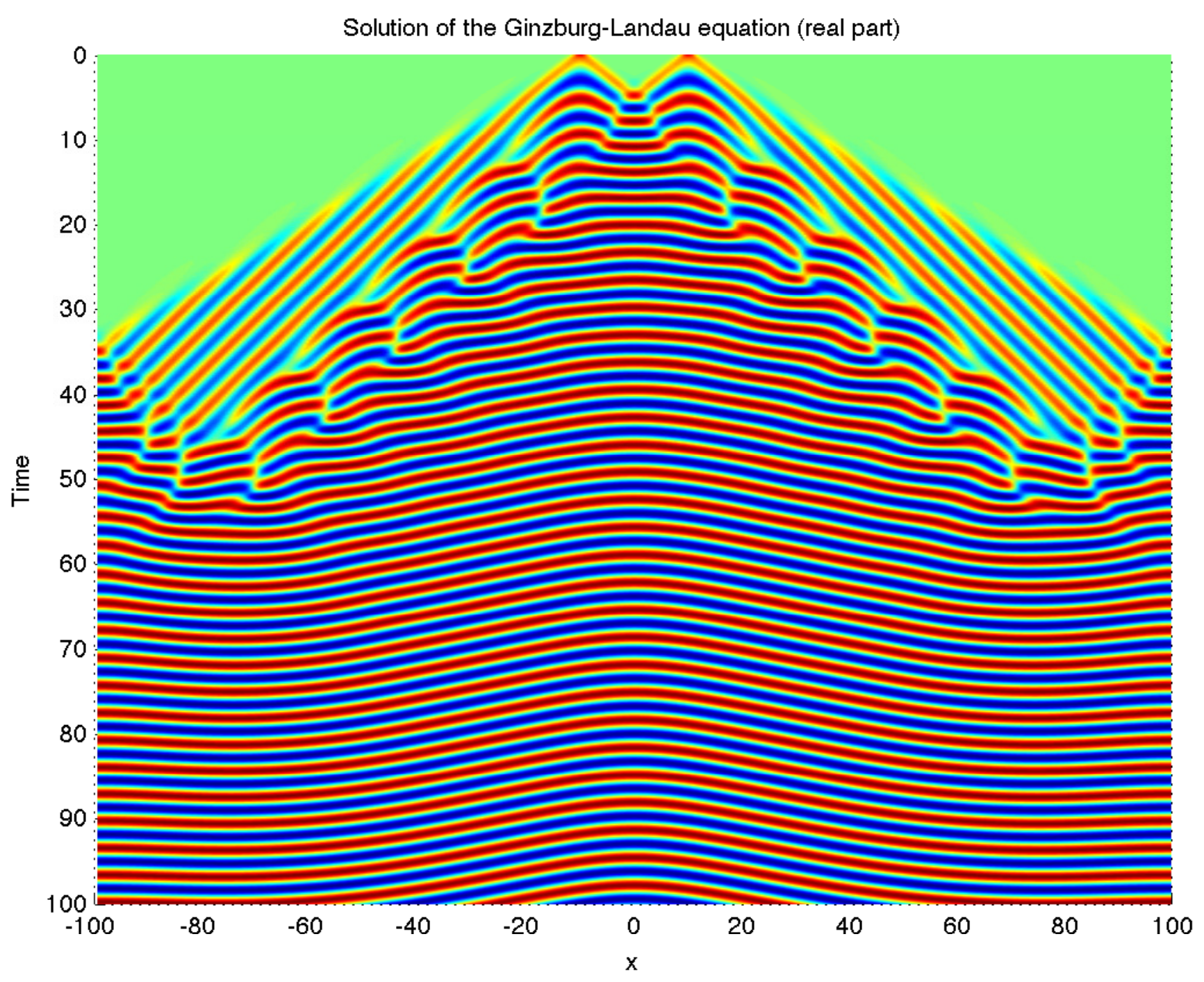}}
\end{center}
\caption{\small Colormaps of the amplitude $|u(x,t)|^2$ (left) and real part $\Re(u(x,t))$ (right) of the solution of (\ref{eq:CGL}). \label{fig:CGL}}
\end{figure}

To apply the composition methods presented in previous sections, it seems natural to split equation (\ref{eq:CGL}) as 
\begin{eqnarray} \label{eq:fp}
\frac{\partial u(x,t)}{\partial t}  = (1+i c_1) \Delta u(x,t) + \varepsilon u(x,t),
\end{eqnarray}
whose solution is $u(x,t)= \e^{\varepsilon t} \e^{t (1+i c_1) \Delta} u_0(x)$ for $t \geq 0$, and 
\begin{eqnarray} \label{eq:sp}
\frac{\partial u(x,t)}{\partial t}  =  - (1-i c_3)  |u(x,t)|^2 u(x,t)
\end{eqnarray}
with solution is for $t \geq 0$
\begin{align*}
u(x,t) =  \e^{- (1-i c_3) \int_0^t M(x,s) ds} u_0(x)  = \e^{- \frac{\beta}{2} \log(1+2 t   M_0(x))} u_0(x). 
\end{align*}
Here we have first solved  the  equation for $M(x,t):=|u(x,t)|^2$, given by 
\begin{eqnarray*}
\frac{\partial M(x,t)}{\partial t}  =  - 2  M^2(x,t),
\end{eqnarray*}
with solution 
\begin{eqnarray*}
M \left(x, t \right) = \frac{M_0(x)}{1+2 M_0(x) t} .
\end{eqnarray*}
Considering $t$ now as a complex variable with positive real part does not raise any difficulty for the first part, since $\e^{\varepsilon t} \e^{t (1+i c_1) \Delta}$ is well-defined. More care has to be taken for the second part, since $u \mapsto |u|^2 u$ is \emph{not} a holomorphic function, and this prevents us from solving (\ref{eq:CGL}) in its current form. As a consequence, we  first rewrite (\ref{eq:CGL}) as a system for $(v(x,t),w(x,t))$ where $v(x,t) =\Re(u(x,t))$ and $w(x,t)=\Im(u(x,t))$: 
\begin{eqnarray} \label{eq:syst}
\left\{
\begin{array}{rcl}
\displaystyle \frac{\partial v(x,t)}{\partial t} &=& \Delta v(x,t) - c_1 \Delta w(x,t) + \varepsilon v(x,t)  -(v^2(x,t)+w^2(x,t)) (v(x,t)+c_3 w(x,t)) \\
\displaystyle \frac{\partial w(x,t)}{\partial t} &=& c_1 \Delta v(x,t) + \Delta w(x,t) + \varepsilon w(x,t)  -(v^2(x,t)+w^2(x,t)) (-c_3 v(x,t) + w(x,t))
\end{array}
\right.
\end{eqnarray}
and now solve it for complex time $t \in \C$ with $\Re(t) \geq 0$. Observing that 
\begin{eqnarray*}
\left(
\begin{array}{cc}
-1 & -c_3 \\
c_3 & -1
\end{array}
\right)
= P D_3 P^{-1} \mbox{ and }
\left(
\begin{array}{cc}
1 & -c_1 \\
c_1 & 1
\end{array}
\right)
=P D_1P^{-1},
\end{eqnarray*}
with 
$$
D_1=
\left(
\begin{array}{cc}
\alpha & 0 \\
0 & \bar{\alpha}
\end{array}
\right), 
D_3=\left(
\begin{array}{cc}
-\beta & 0 \\
0 & -\bar{\beta}
\end{array}
\right), 
P = 
\left(
\begin{array}{cc}
i & 1 \\
1 & i
\end{array}
\right) \mbox{ and }
P^{-1}=
\left(
\begin{array}{cc}
-\frac{i}{2} & \frac12 \\
\frac12 & -\frac{i}{2}
\end{array}
\right),
$$
system (\ref{eq:syst}) can be rewritten as 
\begin{eqnarray} \label{eq:systbis}
\left\{
\begin{array}{rcl}
\displaystyle \frac{\partial \tilde{v}(x,t)}{\partial t} &=& \Big( \alpha \Delta \tilde{v}(x,t) + \varepsilon \tilde{v}(x,t) \Big) -\Big( \beta \tilde{M}(x,t) \tilde{v}(x,t) \Big)\\
\displaystyle \frac{\partial \tilde{w}(x,t)}{\partial t} &=& \Big( \bar{\alpha} \Delta \tilde{w}(x,t) + \varepsilon \tilde{w}(x,t) \Big) -\Big( \bar{\beta} \tilde{M}(x,t) \tilde{w}(x,t)\Big)
\end{array}
\right.
\end{eqnarray}
where $\tilde{M}(x,t) = 4 i \tilde{v}(x,t) \tilde{w}(x,t)$ and where 
\begin{eqnarray*}
\left(
\begin{array}{c}
\tilde{v} \\
\tilde{w}
\end{array}
\right)= 
\frac12
\left(
\begin{array}{cc}
-i & 1 \\
1 & -i
\end{array}
\right)
\left(
\begin{array}{c}
v \\
w
\end{array}
\right).
\end{eqnarray*}
It is not difficult to see that the exact solution of the second part of (\ref{eq:systbis}) is given by 
\begin{eqnarray} \label{eq:vtwt}
\left\{
\begin{array}{rcl}
\tilde{v}(x,t)  &=& \tilde{v}_0(x) \e^{-\frac{\beta}{2} \log(1+2 t \tilde{M}_0(x))} \\
\tilde{w}(x,t) &=& \tilde{w}_0(x) \e^{-\frac{\bar{\beta}}{2} \log(1+2 t \tilde{M}_0(x))}
\end{array}
\right.
\end{eqnarray}
where $\tilde{M}_0(x)$ is now defined as $\tilde{M}_0(x):=4 i  \tilde{v}_0(x) \tilde{w}_0(x)$. Note that here, by convention, the logarithm refers to the principal value of $\log(z)$ for  complex numbers:  if $z=(a+i b)=r \e^{i \theta}$ with $-\pi < \theta \leq \pi$, then 
$$
\log z: = \ln r + i \theta = \ln | z  | + i \arg z =     \ln(|a + ib|) + 2i\arctan \left(\frac{b}{a + \sqrt{a^2+b^2}}\right).
$$
Since $\log(z)$ is \emph{not defined} for $z \in \R^{-}$, this means that the solution $(\tilde{v}(x,t), \tilde{w}(x,t))$ is defined only as long as $1+2 \tilde{M}_0(x) t \notin \R^{-}$. Finally, the solution $(v(x,t),w(x,t))$ is of the form
\begin{eqnarray*}
\left\{
\begin{array}{rcl}
v(x,t)  &=& v_0(x) \frac{(\e^{-\beta L(x,t)} + \e^{-\bar{\beta} L(x,t)})}{2} - w_0(x) \frac{(\e^{-\beta L(x,t)} - \e^{-\bar{\beta} L(x,t)})}{2i}  \\
w(x,t) &=& v_0(x) \frac{(\e^{-\beta L(x,t)} - \e^{-\bar{\beta} L(x,t)})}{2i} + w_0(x) \frac{(\e^{-\beta L(x,t)} + \e^{-\bar{\beta} L(x,t)})}{2}
\end{array}
\right.
\end{eqnarray*}
where $L(x,t):= \log(1+2 t \tilde{M}_0(x))=\log(1+2 t M_0(x))$ with $M_0(x)=v_0^2(x)+w_0^2(x)$.\\ 

Denoting $V=(v_1, \ldots, v_N) \in \R^N$ and $W=(w_1, \ldots, w_N) \in \R^N$, we eventually have to numerically solve the following system:
\begin{eqnarray*} 
\left\{
\begin{array}{rcl}
\dot{V} &=& AV - c_1 A W + \varepsilon V - G (V+c_3 W) \\
\dot{W}  &=& c_1 A V +A W + \varepsilon W  - G (-c_3 V + W)
\end{array}
\right.
\end{eqnarray*}
where $G$ is the diagonal matrix with $G_{i,i}=v_i^2+w_i^2$. \\


Equation \eqref{eq:CGL} is solved with periodic boundary conditions on the interval $[-100,100]$. Now, in the previous example,
the interval  is discretised on a uniform grid, i.e., $x_j = j/N, j  =0,\ldots, N-1,~ N\in \mathbb N$ with $N =512$, and $u(x,t)$ is approximated by Fourier pseudo-spectral methods. The successive errors $E_\tau := \|\mathbf{u}_\tau- \mathbf{u}_{\tau/2}\|_\infty$ are shown also here to confirm the convergence order.  
Figure \ref{fig:cgl} shows the successive errors, at final time $T =10$, of the schemes obtained by applying the sequence (\ref{foit}) from
the basic Strang splitting and the fourth-order scheme (\ref{s4sim}) with $\tau_j = 0.1/2^j, ~j = 1,\ldots,7$. The observed order
of convergence matches the previous analysis with a slightly better performance for the highest order.  Figure \ref{fig:cgl:cost} shows the successive errors versus the number of basic integrators.

\begin{figure}
\begin{center}
\scalebox{0.9}{\includegraphics{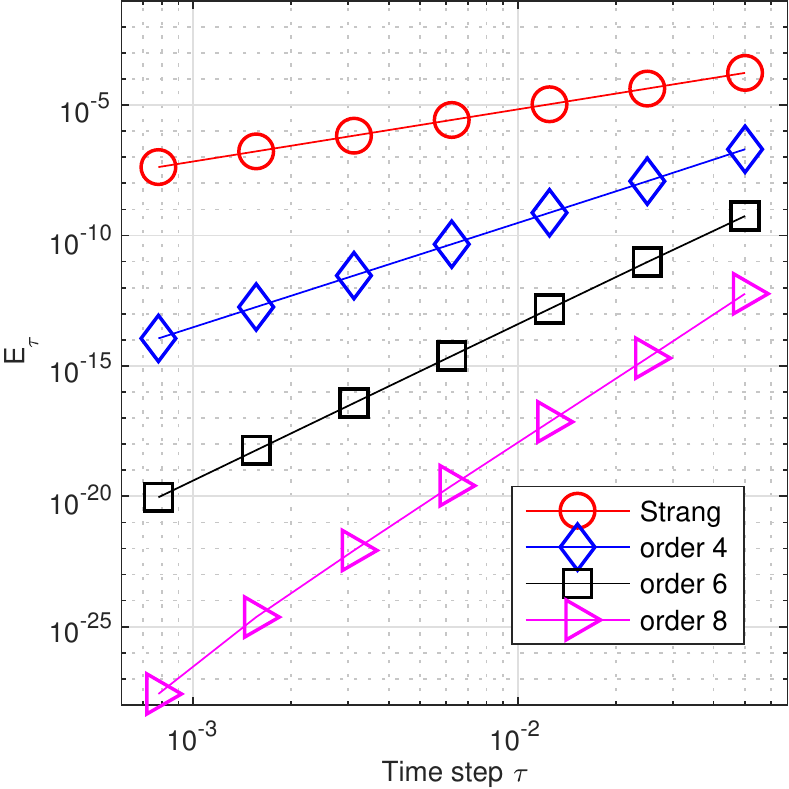}} \scalebox{0.9}{\includegraphics{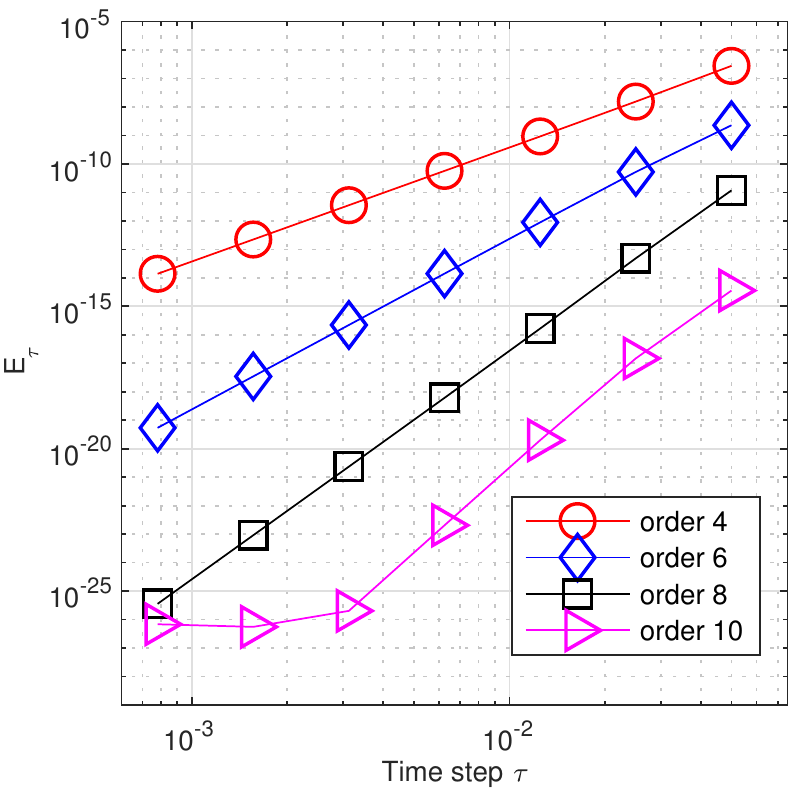}}
\end{center}
\caption{\small Successive errors $E_\tau$ versus time step $\tau$ for Eq. \eqref{eq:CGL} of the composition methods starting from the Strang scheme (left) and the fourth order scheme $\mathcal{S}_{\tau}^{[4]}$ (right). \label{fig:cgl}}
\end{figure}  

\begin{figure}
\begin{center}
\scalebox{0.9}{\includegraphics{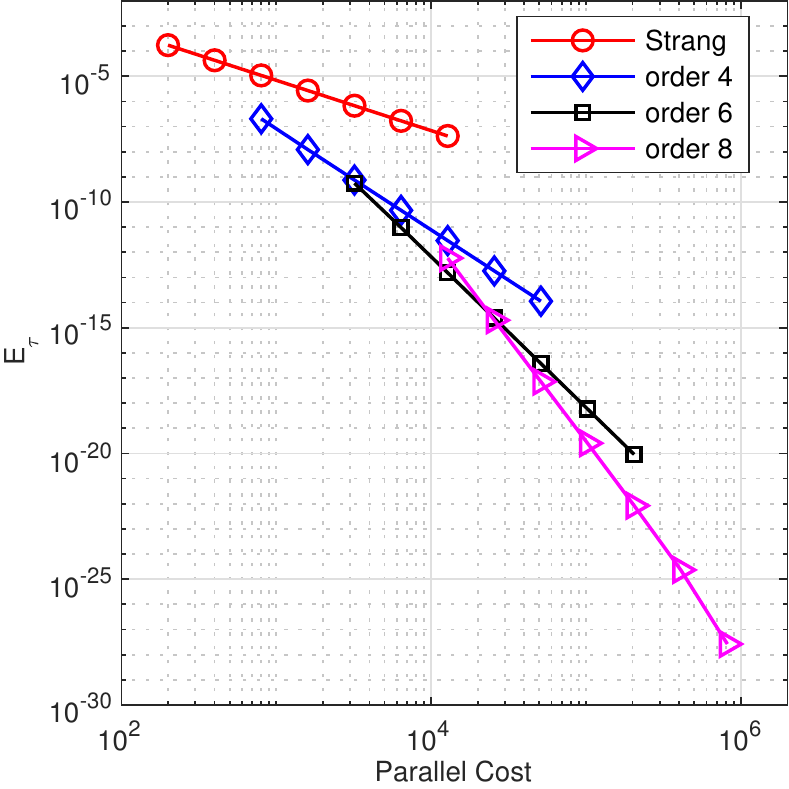}} \scalebox{0.9}{\includegraphics{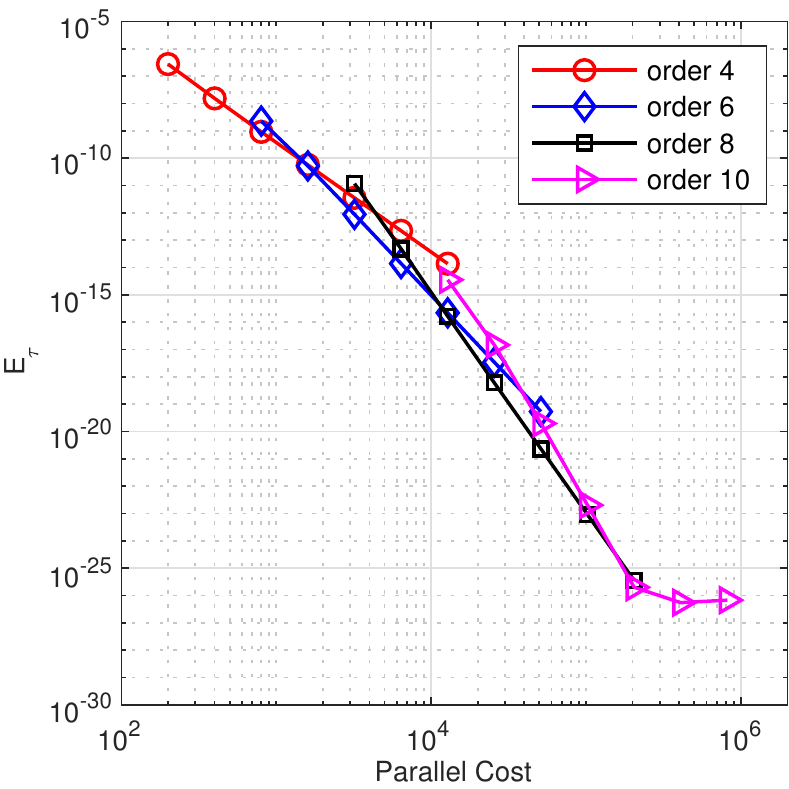}}
\end{center}
\caption{\small Successive errors $E_\tau$ versus number of basic integrators for Eq. \eqref{eq:CGL} of the composition methods starting from the Strang scheme (left) and the fourth order scheme $\mathcal{S}_{\tau}^{[4]}$ (right). \label{fig:cgl:cost}}
\end{figure}

\subsection*{Acknowledgements}
The work of the first three authors has been supported by Ministerio de Econom{\'i}a y Com\-pe\-ti\-ti\-vi\-dad (Spain) through project MTM2016-77660-P (AEI/FEDER, UE).  
PC acknowledges funding by INRIA through its Sabbatical program and thanks the University of the Basque Country for its hospitality. This work was initiated during YZ's visit at the University of Rennes, IRMAR.

\appendix

\section*{Appendix}

In this Appendix we provide an alternative proof of Proposition \ref{prop.2} via Lie formalism. This allows us not only to gain some additional 
insight into the structure of the methods, but also to generalize the result on pseudo-symplecticity to other properties of geometric character, very often
related to Lie groups, the differential equation may possess.

To begin with, if $\varphi_{\tau}$ is the exact flow of the equation (\ref{de.1}), then for each  infinitely differentiable map $g$,
 the function $g(\varphi_{\tau}(x))$ admits an expansion of the form \cite{arnold89mmo,sanz-serna94nhp}
\[
g(\varphi_{\tau}(x)) = \exp(\tau F)[g](x) = g(x) + \sum_{k\geq 1}
\frac{\tau^k}{k!} F^k[g](x), 
\]
where $F$ is the Lie derivative associated with $f$,
\begin{equation}  \label{eq.1.1b}
  F = \sum_{i\ge1} \, f_i(x) \, \frac{\partial }{\partial x_i}.
\end{equation}
Analogously, for a given integrator $\psi_{\tau}$ one can associate a series of linear operators so that
\[
    g(\psi_{\tau}(x)) = \exp(Y(\tau))[g](x), \quad \mbox{ with }  \quad Y(\tau) = \sum_{j \ge 1} \tau^j Y_j
\]
for all functions $g$ \cite{blanes08sac}. The integrator $\psi_{\tau}$ is of order $k$ if
\[ 
   Y_1 = F \qquad \mbox{ and } \qquad Y_j = 0 \;\; \mbox{ for } \;\; 2 \le j \le k.
\]   
For the adjoint integrator $\psi_{\tau}^* = \psi_{\tau}^{-1}$, one clearly has
\[
   g(\psi_{\tau}^*(x)) = \exp \big(-Y(-\tau) \big)[g](x).
\]   
This shows that $\psi_{\tau}$ is symmetric if and only if $Y(\tau) = \tau Y_1 + \tau^3 Y_3 + \cdots$, and in particular, that symmetric methods
are of even order.

An integrator $\mathcal{S}_{\tau}^{[2n]}$ of order $2n  \ge 2$ can be associated with the series
\begin{equation}  \label{s2.1}
  \Phi(\tau) = \exp \left( \tau F + \tau^{2n+1} N_{2n+1} +  \tau^{2n+2} N_{2n+2} +  \cdots \right)
\end{equation}
for certain operators $N_k$. Then, the adjoint method $(\mathcal{S}_{\tau}^{[2n]})^*$  has the associated series
\[
  \Phi^*(\tau) = \exp \left( \tau F + \tau^{2n+1} N_{2n+1}  - \tau^{2n+2} N_{2n+2} +  \cdots \right).
\]
In consequence, $\mathcal{S}_{\tau}^{[2n]}$ is pseudo-symmetric of order $q \ge 2n+1$.  

\

\noindent
(i) Let us analyse first the case $q > 2n+1$. Then, $N_{2n+2} = \cdots = N_{q} = 0$ in (\ref{s2.1}) and
the series of operators associated with the composition 
$ \psi_{\tau}^{[2n+1]} = \psis_{\gamma_1 \tau}^{[2n]} \circ \, \psis_{\gamma_2 \tau}^{[2n]}$ is
\[
   \Psi(\tau) = \Phi(\gamma_2 \tau) \, \Phi(\gamma_1 \tau) \equiv \exp( V(\tau) ),
\]
where $V(\tau)$ can be formally  determined by applying the Baker--Campbell--Hausdorff formula \cite{blanes16aci} as
\[
\begin{aligned}
  & V(\tau)   =  (\gamma_1 + \gamma_2) \tau F + (\gamma_1^{2n+1} + \gamma_2^{2n+1}) \tau^{2n+1} N_{2n+1}  \\
  &  \quad  +  \frac{1}{2} ( \gamma_2 \gamma_1^{2n+1} -  \gamma_1 \gamma_2^{2n+1}) \tau^{2n+2} [F, N_{2n+1}] + 
     (\gamma_1^{2n+3} + \gamma_2^{2n+3}) \tau^{2n+3} N_{2n+3} + \mathcal{O}(\tau^{2n+4}).
\end{aligned}
\]
Here $[\cdot , \cdot]$ denotes the usual Lie bracket.
Clearly, the order of $ \psi_{\tau}^{[2n+1]}$ is $2n+1$ if  
\begin{equation}  \label{gap1}
   \gamma_1 + \gamma_2 = 1,  \qquad \gamma_1^{2n+1} + \gamma_2^{2n+1} = 0,
\end{equation}
so that $\gamma_2 = \bar{\gamma}_1 \equiv \gamma$ is given by eq. (\ref{coefi.com}) (with $k=2n$).  In that case we can write
\[
  V(\tau) = \tau  F + \tau^{2n+2} G(\tau), \qquad \mbox{ with } \qquad G(\tau) = \sum_{i=0}^{\infty} \tau^i G_i,
\]
whereas for the adjoint method one has
\[
   \Psi^{*}(\tau) =  \exp( -V(-\tau) )= \exp \left(  \tau  F + \tau^{2n+2} \widetilde{G}(\tau)  \right),  
   \qquad \mbox{ with } \qquad \widetilde{G}(\tau) = \sum_{i=0}^{\infty} (-1)^{i+1} \tau^i G_i.
\]
In particular, $G_0 =  \frac{1}{2} ( \gamma_2 \gamma_1^{2n+1} -  \gamma_1 \gamma_2^{2n+1})  [F, N_{2n+1}] $, and 
$G_1 = (\gamma_1^{2n+3} + \gamma_2^{2n+3})  N_{2n+3}$.

The series $ \Psi(\tau)$ can also be written as
\[
   \Psi(\tau) =  \exp \left( \frac{\tau}{2} F \right) \exp W(\tau) \exp \left( \frac{\tau}{2} F \right), 
\]
where $W(\tau)$ is determined by applying the symmetric BCH formula \cite{blanes16aci} as
\[
\begin{aligned}
   &  W(\tau) = \tau^{2n+2} G(\tau) + \frac{1}{24} \tau^{2n+4} [F,[F,G(\tau)]] + \mathcal{O}(\tau^{4n+4})    \\
   & \quad = \tau^{2n+2} G_0 + \tau^{2n+3} G_1 + \tau^{2n+4} \big(G_2 + \frac{1}{24} [F,[F,G_0]] \big)+  \mathcal{O}(\tau^{2n+5}). 
\end{aligned}
\]
By the same token,
\[
   \Psi^*(\tau) =  \exp \left( \frac{\tau}{2} F \right) \exp \big(-W(-\tau) \big) \exp \left( \frac{\tau}{2} F \right). 
\]
Consider now the method
\begin{equation}   \label{mad1}
     R_{\tau} = \frac{1}{2} \big( \psi_{\tau}^{[2n+1]} + (\psi_{\tau}^{[2n+1]})^* \big).
\end{equation}
Clearly, its associated series of operators,
\[
  \mathcal{R}(\tau) = \frac{1}{2}  \Psi(\tau) + \frac{1}{2} \Psi^*(\tau),
\]
can be expressed as
\[
    \mathcal{R}(\tau) = \exp \left( \frac{\tau}{2} F \right) \mathcal{Y} \,  \exp \left( \frac{\tau}{2} F \right),
\]
where 
\[
    \mathcal{Y} = \frac{1}{2} \exp \left(  W(\tau) \right)  + \frac{1}{2} \exp \left( -W(-\tau) \right).    
\]
By expanding, we have        
\[
    \mathcal{Y} = I + \frac{1}{2}  \big( W(\tau) - W(-\tau) \big) + \frac{1}{4}  \big( W^2(\tau) + W^2(-\tau) \big) + \cdots,
\]
but
\[
  W(\tau) - W(-\tau) = 2 \tau^{2n+3} \sum_{i=0}^{\infty} \tau^{2i} z_{2i}  \equiv 2 \tau^{2n+3} Z(\tau),
\]
with $z_0 = G_1$, $z_2 = G_3 + \frac{1}{24} [F,[F, G_1]]$, etc. In general, $z_{2i}$ is a linear combination of the operators
$\{F, N_{2n+1}, N_{2n+2}, \ldots\}$ and their nested Lie brackets. In addition, $W^2(\tau) + W^2(-\tau) = \mathcal{O}(\tau^{4n+4})$,
so that we can write
\[
   \mathcal{Y} = I + \tau^{2n+3} Z + \mathcal{O}(\tau^{4n+4}) = \exp  \left( \tau^{2n+3} Z \right) + \mathcal{O}(\tau^{4n+4})
\]   
and 
\[
    \mathcal{R}(\tau) = 
      \exp \left( \frac{\tau}{2} F \right) \, \exp \left( \tau^{2n+3} Z \right) \, \exp \left( \frac{\tau}{2} F \right) +  \mathcal{O}(\tau^{4n+4}),
\]
whence the following statements follow at once:
\begin{itemize}
  \item   Method (\ref{mad1}) is of order $2n+2$, since $ \mathcal{R}(\tau) = \exp( \tau F) +  \mathcal{O}(\tau^{2n+3})$.
  \item Since $Z(\tau)$ only contains even powers of $\tau$ (up to $\tau^{q+1} N_{q+1}$), 
  then $\e^{\frac{\tau}{2} F} \e^{\tau^{2n+3} Z} \e^{\frac{\tau}{2} F}$ is a symmetric composition and
  $R_{\tau}$ is pseudo-symmetric of order $\min(q, 4n+3)$.
  \item Let us suppose that scheme (\ref{mad1}) is applied to a Hamiltonian system and that $\psis_{\tau}^{[2n]}$ is of pseudo-symplecticity order $r$. 
  Since $Z$ is an operator in the free Lie algebra
  generated by $\{ F, N_{2n+1}, N_{2n+2}, \ldots \}$, clearly the composition $\e^{\frac{\tau}{2} F} \e^{\tau^{2n+3} Z} \e^{\frac{\tau}{2} F}$ is symplectic
  (at least up to terms  $\mathcal{O}(\tau^r)$). As a matter
  of fact, this can be extended to any geometric property the differential equation (\ref{de.1}) has: volume-preserving, unitary, etc., as long as the basic scheme
  $\psis_{\tau}^{[2n]}$ preserves this property up to order $r$.
\end{itemize}
Finally, in view of (\ref{eq:conjadjointq})-(\ref{eq:conjadjoint}) and recalling that $q \ge 2n+2$, the same considerations apply if we take the complex conjugate instead of the adjoint, i.e., to the
scheme
\begin{equation}  \label{mcc1}
     \hat R_{\tau} = \Re( \psi_{\tau}^{[2n+1]} ) =  \frac{1}{2} \left( \psi_{\tau}^{[2n+1]} + \overline{\psi}_{\tau}^{[2n+1]} \right). 
\end{equation}

\noindent
(ii) We analyse next the case $q = 2n+1$. Then $N_{2n+2} \ne 0$ in (\ref{s2.1}) and, if
$\gamma_1$ and $\gamma_2$ verify equations (\ref{gap1}), then $V(\tau)$ read
\[
 V(\tau)   =   \tau F +   \tau^{2n+2} V_0 + \mathcal{O}(\tau^{2n+3})
\]
with 
\[ 
   V_0 =  (\gamma_1^{2n+2} + \gamma_2^{2n+2}) N_{2n+2}  
+  \frac{1}{2} ( \gamma_2 \gamma_1^{2n+1} -  \gamma_1 \gamma_2^{2n+1})  [F, N_{2n+1}].
\]
Notice that, whereas $\gamma_1^{2n+2} + \gamma_2^{2n+2}$ is a real
number, $\gamma_2 \gamma_1^{2n+1} -  \gamma_1 \gamma_2^{2n+1}$ has non-vanishing real and imaginary parts. In any event,
the same procedure as in the previous case can be carried out, leading to the conclusion that method (\ref{mad1}) is still of order $2n+2$. 

The situation is different, however, for method (\ref{mcc1}), since relations (\ref{eq:conjadjointq})-(\ref{eq:conjadjoint}) do not provide
further information. We have to analyse instead
\[
   \Re( \Psi(\tau))  =  \exp \left( \frac{\tau}{2} F \right) \Re \big( \exp W(\tau) \big) \exp \left( \frac{\tau}{2} F \right), 
\]
with $ W(\tau) =  \tau^{2n+2} V_0+ \mathcal{O}(\tau^{2n+3})$. Noting that
\[
   \Re \big( \exp W(\tau) \big) = I + \Re \big( W(\tau) \big) + \mathcal{O}(\tau^{4n+4}) = I + \tau^{2n+2} \Re(V_0) + \mathcal{O}(\tau^{2n+3})
\]
then we can write
\[
     \Re( \Psi(\tau)) = \exp \left( \frac{\tau}{2} F \right) \, \exp \Big( \tau^{2n+2}  \Re(V_0) + \mathcal{O}(\tau^{2n+3}) \Big) \, 
         \exp \left( \frac{\tau}{2} F \right) + \mathcal{O}(\tau^{4n+4}).
\]
In consequence, $\hat{R}_{\tau}$ is a method of order $2n+1$, pseudo-symmetric of order $2n+1$ and pseudo-symplectic of order
$\min(r,2n+1)$.

\bibliographystyle{alpha}

\end{document}